\def\draft{n}
\theoremstyle{plain}
\newtheorem{theorem}{Theorem}
\newtheorem{proposition}{Proposition}[section]
\newtheorem{lemma}[proposition]{Lemma}
\newtheorem{ttheorem}[proposition]{Theorem}
\newtheorem{corollary}[proposition]{Corollary}
\theoremstyle{definition}
\newtheorem{definition}[proposition]{Definition}
\newtheorem{fact}[proposition]{Fact}
\theoremstyle{remark}
\newtheorem{remark}[proposition]{Remark}
\def\printname#1{
	\if\draft y
		\smash{\makebox[0pt]{\hspace{-0.5in}
			\raisebox{8pt}{\tt\tiny #1}}}
	\fi
}
\def\mapright#1{\smash{\mathop{\longrightarrow}\limits^{#1}}}
\def\pNor{c_{\mathrm{match}}}
\def\pev{c_{\mathrm{Ising}}}
\def\cdet{c_{\mathrm{det}}}
\def\cform{c_{\mathrm{form}}}
\def\BD{{\mathbb D}}
\def\sign{\mathrm{sign}}
\def\cross{\mathrm{cr}}
 \newlength{\standardunitlength}
\long\def\@makecaption#1#2{%
     \vskip 10pt

\setbox\@tempboxa\hbox{
       \small\sf{\bfcaptionfont #1. }\ignorespaces #2}%
     \ifdim \wd\@tempboxa >\captionwidth {%
         \rightskip=\@captionmargin\leftskip=\@captionmargin
         \unhbox\@tempboxa\par}%
       \else
         \hbox to\hsize{\hfil\box\@tempboxa\hfil}%
     \fi}
\font\bfcaptionfont=cmssbx10 scaled \magstephalf
\newdimen\@captionmargin\@captionmargin=2\parindent
\newdimen\captionwidth\captionwidth=\hsize
\def\lbl#1{\label{#1}\printname{#1}}
\def\BZ{\mathbb Z}
\def\BQ{\mathbb Q}
\def\BF{\mathbb F}
\def\C{\mathcal C}
\def\E{\mathcal E}
\def\P{\mathcal P}
\def\E{\mathcal E}
\def\Pfaf{\operatorname{Pfaf}}
\def\Arf{\mathrm{Arf}}
\def\Hom{\mathrm{Hom}}
\begin{document}

\title[Optimality of the Arf Invariant Formula]{On the optimality of
  the Arf Invariant Formula for graph polynomials}

\author{Martin Loebl}
\address{Department of Applied Mathematics and 
Institute of Theoretical Computer Science (ITI) \\
Charles University \\
Malostranske n. 25 \\
118 00 Praha 1 \\
Czech Republic.}
\email{loebl@kam.mff.cuni.cz}

\author{Gregor Masbaum}
\address{Institut de Math{\'e}matiques de Jussieu (UMR 7586 du CNRS)\\
Universit{\'e} Paris Diderot (Paris 7) \\
Case 7012 - Site Chevaleret\\
75205 Paris Cedex 13\\
France }
\email{masbaum@math.jussieu.fr}
\urladdr{www.math.jussieu.fr/\textasciitilde masbaum/}

\date{Revised version: 28 May 2010 (First version: 2 August  2009)
\newline 2010 {\em Mathematics Classification.} Primary 05C31 Secondary 57M15
\newline
{\em Key words and phrases:} Ising partition function, Arf invariant
  formula, graph embedding, Pfaffian, complexity.}

\begin{abstract}
We prove optimality of the Arf invariant formula for the generating
function of even subgraphs, or, equivalently, the Ising partition
function, of a graph.
\end{abstract}

\maketitle

\tableofcontents

\section{Introduction}
\lbl{sec.intro}
Let $G= (V(G),E(G))$ be a finite unoriented  graph (loop-edges and multiple
edges are allowed). We say that $E'\subset E(G)$ is
{\em even} if the graph $(V(G),E')$ has even degree (possibly
zero) at each vertex. By abuse of language, $E'$ is also called an
{\em even subgraph} of $G$. We say that $M\subset E(G)$ is a {\em perfect matching} if the
graph $(V(G),M)$ has degree one at each vertex.  Let $\mathcal{E}(G)$
denote the set of all even subgraphs of $G$, and let $\mathcal {P}(G)$
denote the set of all perfect matchings of $G$. 

We assume that an indeterminate $x_e$ is associated with each edge
$e$, and define the generating polynomials for even sets and for
perfect matchings,  $\E_G$ and $\P_G$, in $
\BZ[(x_e)_{e\in E(G)}]$,  as follows: 
$$\E_G(x)= \sum_{E'\in\mathcal{E}(G) }\,\,\prod_{e\in E'}x_e~,$$
$$\P_G(x)= \sum_{M\in\mathcal{P}(G) }\,\,\prod_{e\in M}x_e~.$$
 Knowing the polynomial $\E_G$ is equivalent to knowing the partition
 function 
 $Z^{\mathrm{Ising}}_G$
of the Ising model on
the graph $G$.
This is explained later in the introduction.

Assume the vertices of $G$ are numbered
from $1$ to $n$. If $D$ is an orientation of $G$, we denote by $A(G,D)$ the {\em skew-symmetric adjacency
matrix} of $D$ defined as follows: The diagonal entries of  $A(G,D)$ are zero,
and the off-diagonal entries are 
$$A(G,D)_{ij}=\sum \pm x_e~,$$ where the sum is over all edges $e$
connecting vertices $i$ and $j$, and the sign in front of $x_e$ is $1$ if $e$ is
oriented from $i$ to $j$ in the orientation $D$, and $-1$
otherwise. As is well-known, the Pfaffian
of this matrix counts perfect matchings of $G$ {\em with signs:} 
$$\Pfaf A(G,D) = \sum_{M\in \mathcal{P}(G)}\sign(M,D) \prod_{e\in
  M}x_e~,$$ where $\sign(M,D)=\pm 1$. We use this as the definition of the sign of a perfect
matching $M$ with respect to an orientation $D$.

We denote the polynomial 
 $\Pfaf A(G,D)\in \BZ[(x_e)_{e\in E(G)}]$ 
by
$F_D(x)$ and call it the {\em
Pfaffian} associated to the orientation $D$. The following result is
well-known.

\begin{theorem}[Kasteleyn \cite{K}, Galluccio-Loebl \cite{GL1},
  Tesler \cite{T}, Cimasoni-Reshetikhin \cite{CR}] \lbl{Pfaff-formula} If $G$
embeds into an orientable surface of genus $g$, then there exist $4^g$
orientations $D_i$ ($i=1,\ldots, 4^g$) of $G$ such that the perfect matching
polynomial $\P_G(x)$ can be
expressed as a linear combination  of the Pfaffian polynomials
$F_{D_i}(x)$.
\end{theorem}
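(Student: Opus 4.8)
\emph{Strategy.} The plan is to identify the orientations $D_i$ with the spin structures on the embedding surface, to describe the corresponding Pfaffian signs by means of the associated quadratic forms, and then to recover $\P_G$ by the classical Arf invariant (Gauss sum) identity for those quadratic forms. The geometric step (Pfaffian signs versus quadratic forms) is where the real work lies; the assembly at the end is linear algebra over $\BZ/2$.

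\emph{Reductions.} If $\mathcal{P}(G)=\emptyset$ the assertion is trivial (take all coefficients zero), so assume $\mathcal{P}(G)\neq\emptyset$ and fix a reference perfect matching $M_0$. Loop-edges never lie in a perfect matching and may be deleted; a disconnected $G$ is handled componentwise, using that the Pfaffian of a skew-symmetric block-diagonal matrix is the product of the Pfaffians of its blocks (and that the genus is additive over components); and, passing to a regular neighbourhood and capping off its boundary circles, we may assume $G$ is cellularly embedded in a closed connected orientable surface $\Sigma$ of genus $g'\le g$. It then suffices to exhibit a linear combination of $4^{g'}$ Pfaffian polynomials, since one orientation may be repeated to make the list have length exactly $4^g$ (the repeats carrying coefficient $0$). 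Put $H=H_1(\Sigma;\BZ/2)\cong(\BZ/2)^{2g'}$ with its non-degenerate mod-$2$ intersection pairing $\alpha\cdot\beta$.

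\emph{Kasteleyn orientations and quadratic forms.} Say an orientation $D$ of $G$ is a \emph{Kasteleyn orientation} if, around the boundary of every face of the embedding, a suitable parity condition on the directions of the bounding edges is satisfied (in the planar case: every face has an odd number of clockwise edges). Such orientations exist, and, calling two of them equivalent when they differ by reversing all edges at a single vertex, the set of equivalence classes is a torsor over $H^1(\Sigma;\BZ/2)=\Hom(H,\BZ/2)$ and therefore has exactly $2^{2g'}=4^{g'}$ elements; pick representatives $D_1,\dots,D_{4^{g'}}$. The crucial input --- Kasteleyn's theorem in genus $0$, and in general the dimer incarnation (Cimasoni--Reshetikhin, Tesler) of Johnson's correspondence between spin structures and quadratic forms --- is that each $D_i$ determines a quadratic form $q_i\colon H\to\BZ/2$ refining the intersection pairing, i.e.\ $q_i(\alpha+\beta)=q_i(\alpha)+q_i(\beta)+\alpha\cdot\beta$, in such a way that the differences $q_i-q_1$ run over all of $\Hom(H,\BZ/2)$ and, for every $M\in\mathcal{P}(G)$,
\[
\sign(M,D_i)=\sign(M_0,D_i)\,(-1)^{q_i([M\triangle M_0])},
\]
where $[M\triangle M_0]\in H$ is the homology class of the even subgraph $M\triangle M_0$. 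I expect this to be the main obstacle: one must follow the change of Pfaffian sign as $M_0$ is recombined along the simple cycles composing $M\triangle M_0$, check that the net sign depends on $M\triangle M_0$ only through its homology class, and recognise the resulting function $H\to\BZ/2$ as a quadratic refinement of the intersection pairing.

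\emph{Assembly.} Granting the displayed identity, $\sign(M_0,D_i)\,F_{D_i}(x)=\sum_{M\in\mathcal{P}(G)}(-1)^{q_i([M\triangle M_0])}\prod_{e\in M}x_e$. Since $q_i-q_1$ is additive and ranges over $\Hom(H,\BZ/2)$, the functions $\alpha\mapsto(-1)^{q_i(\alpha)}$ are obtained from the additive characters of $H$ by multiplication by the single nowhere-zero function $\alpha\mapsto(-1)^{q_1(\alpha)}$, hence form a basis of the space of real functions on $H$; in particular there are real numbers $c_i$ with $\sum_i c_i(-1)^{q_i(\alpha)}=1$ for all $\alpha\in H$. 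Orthogonality of characters gives $c_i=2^{-2g'}\sum_{\alpha\in H}(-1)^{q_i(\alpha)}$, and the Arf invariant formula $\sum_{\alpha\in H}(-1)^{q(\alpha)}=2^{g'}(-1)^{\Arf(q)}$ then yields $c_i=2^{-g'}(-1)^{\Arf(q_i)}$. Consequently
\[
\P_G(x)=\sum_{M\in\mathcal{P}(G)}\prod_{e\in M}x_e=\sum_{i=1}^{4^{g'}}c_i\,\sign(M_0,D_i)\,F_{D_i}(x)=\frac{1}{2^{g'}}\sum_{i=1}^{4^{g'}}(-1)^{\Arf(q_i)}\,\sign(M_0,D_i)\,F_{D_i}(x),
\]
exhibiting $\P_G$ as a linear combination of $4^{g'}\le 4^{g}$, hence (after padding) $4^g$, of the Pfaffian polynomials $F_{D_i}$.
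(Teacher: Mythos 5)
Your proof is correct, and it does follow a genuinely different route from the paper's. You work directly on the surface: you pick Kasteleyn orientations of the cellularly embedded graph, observe that their equivalence classes form a torsor over $H^1(\Sigma;\BF_2)$, and invoke the Cimasoni--Reshetikhin/Johnson dictionary (Kasteleyn orientation $\leftrightarrow$ spin structure $\leftrightarrow$ quadratic refinement of the intersection pairing) to obtain the sign formula $\sign(M,D_i)=\sign(M_0,D_i)(-1)^{q_i([M\triangle M_0])}$. The paper instead flattens the situation: it decomposes $\Sigma_g$ as a ``highway surface'' union a disk, immerses the highway surface into the plane to obtain a special planar drawing, uses Tesler's planar \emph{crossing orientation} theorem as its sole external input, and then proves by hand (Prop.~\ref{2.4}) that the crossing number of a matching in this drawing is computed by the reference quadratic form $q_0$; the other $4^g-1$ orientations $D_q$ are then produced concretely by flipping the edges whose homology classes pair nontrivially with $\ell=q-q_0$. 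The two approaches converge at the final Fourier/Arf step, and your character-orthogonality computation of the coefficients is essentially the paper's Lemma~\ref{2.7} restated. What the paper's route buys is self-containment: it only imports the comparatively elementary planar statement from Tesler and derives the quadratic-form structure and the orientations explicitly, which is also precisely the form needed for the optimality argument of Section~\ref{sec4}. Your route is shorter but black-boxes the heart of the matter --- the identification of Pfaffian-sign discrepancies with values of a quadratic form --- by citing CR, which is one of the sources the theorem is already attributed to; you flag this honestly, but a reader wanting a proof rather than a deduction from the literature would have to unpack that citation. Your preliminary reductions (to no loops, connected, cellular) are all sound; note only that the additivity of orientable genus over components, which you invoke in passing, is a nontrivial fact (Battle--Harary--Kodama--Youngs) and deserves a citation of its own.
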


The explicit expression for $\P_G(x)$ will be given in
Theorem~\ref{arfin2}.  We call it the {\em Arf-invariant formula}, as it is
based on a property of the Arf invariant of quadratic forms in
characteristic two. As far as we know, the relationship with the Arf
invariant was first observed in \cite{CR}. 

Let $\pNor(G)$ be the minimal number of orientations $D_i$ of $G$ so
that $\P_G(x)$ is a linear combination of the Pfaffian polynomials
$F_{D_i}(x)$. We think of $\pNor(G)$ as a kind of complexity of
the graph $G$. Since every graph embeds into some surface, $\pNor(G)$
is finite. Norine \cite{N} conjectured that $\pNor(G)$ is always a
power of $4$. He also showed that $\pNor(G)$ cannot be equal to $2,
3,$ or $5$. However, Miranda and Lucchesi 
\cite{ML} recently disproved Norine's conjecture by exhibiting a graph
$G$ with  $\pNor(G)=6$.

The main result of the present paper is that, contrary to the case of
perfect matchings,  an analogue of Norine's
conjecture is true for the even subgraph
polynomial $\E_G(x)$ (or, equivalently, the Ising partition function
of $G$). To explain the statement, we first need to recall how 
the Arf-invariant formula
for $\P_G$ 
can be used to obtain a similar formula for $\E_G$, using the
following
slight modification of a 
construction of Fisher. (This formula for $\E_G$  follows from
\cite[Theorem 2.3]{GL2}.)
Although the construction may seem a little bit un-natural at first
sight, it is 
justified by Proposition~\ref{thm.delta} and Theorem~\ref{Pf2}
below. We'll briefly comment on a different construction by Kasteleyn
in Remark~\ref{newremark}.  
 
\begin{definition}
\lbl{def.delta} (Fisher \cite{Fi})
Let $G$ be a graph. Let $\sigma=(\sigma_v)_{v\in V(G)}$ be a choice, for every vertex $v$, of
a linear ordering of the half-edges incident with $v$. The {\em blow-up}, or
{\em ${\Delta}$-extension}, 
of $(G,\sigma)$ is the graph $G_{}^\sigma$ obtained by performing the following operation
one by one for each vertex $v$.
Assume first that no edge incident with $v$ is a loop-edge. Then
$\sigma_v$ is the same as a
linear ordering of the edges incident with $v$.
Let $e_1, \ldots, e_d$ be 
this linear ordering
and let $e_i= vu_i$, $i=1, \ldots, d$. We delete
the vertex $v$
and replace it with a path consisting of $6d$ new vertices $v_1, \ldots, v_{6d}$
and edges $v_iv_{i+1}$, $i=1, \ldots ,6d-1$. To this path, we add
edges $v_{3j-2}v_{3j}$, $j=1, \ldots, 2d$. 
Finally we add  edges $v_{6i-4}u_i$ corresponding to the original
edges $e_1, \ldots, e_d$.
This definition can be extended naturally to the case where there are
loop-edges, using that $\sigma_v$ is a linear ordering on the set of half-edges
incident with $v$. 
\end{definition}

The subgraph of $G^\sigma$
spanned by the
$6d$ vertices $v_1,\ldots, v_d$ that replaced a vertex $v$ of the
original graph will be called a {\em gadget} and denoted by
$\Gamma_v$. The edges of $G^\sigma$ which do not belong to a gadget are
in natural bijection with the edges of $G$. By abuse of notation, we will identify an edge of $G$ with the
corresponding edge of $G_{}^\sigma$.  Thus $E(G^\sigma)$ is the disjoint union of $E(G)$ and the
various $E(\Gamma_v)$ ($v\in V(G)$). 
 
It is important to note that different choices of linear orderings
$\sigma_v$ at the vertices of $G$  may lead to non-isomorphic graphs $G^\sigma$.
Nevertheless, one always has the following   
\begin{proposition}[Fisher \cite{Fi}]
\lbl{thm.delta}
There is a natural bijection between the set of even subsets of $G$ and the set
of perfect matchings of $G_{}^\sigma$. More precisely, every even
set $E'\subset E(G)$ uniquely extends to a perfect matching $M\subset
E(G_{}^\sigma)$, and every perfect matching of
$G_{}^\sigma$ arises (exactly once) in this way.
\end{proposition}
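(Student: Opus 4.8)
The plan is to work gadget by gadget: I will show that once we declare which of the original edges incident to a vertex $v$ lie in the even set, the edges of the gadget $\Gamma_v$ are forced to complete a perfect matching in exactly one way, and that this completion exists precisely when the declared set of edges at $v$ is even at $v$.

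First I would make the structure of $\Gamma_v$ explicit. For a vertex $v$ of degree $d$ -- so that $\sigma_v$ linearly orders its $d$ half-edges -- the $6d$ vertices of $\Gamma_v$ break up into $2d$ consecutive triples $T_j=\{v_{3j-2},v_{3j-1},v_{3j}\}$; together with the chord $v_{3j-2}v_{3j}$ the two path edges inside a triple turn each $T_j$ into a triangle, and consecutive triangles $T_j,T_{j+1}$ are joined by the single edge $v_{3j}v_{3j+1}$. Thus $\Gamma_v$ is a chain of $2d$ triangles. The external edge $v_{6i-4}u_i$ attached to $e_i$ meets $\Gamma_v$ at the middle vertex $v_{6i-4}=v_{3(2i-1)-1}$ of the odd-indexed triangle $T_{2i-1}$. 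Writing $S\subseteq\{1,\dots,d\}$ for the set of half-edges at $v$ whose edge we have put into the even set (a loop contributing both of its half-edges), the middle vertices of the triangles $T_{2i-1}$ with $i\in S$ are exactly the vertices of $\Gamma_v$ that get matched outside the gadget, while every other vertex of $\Gamma_v$ must be matched inside it.

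The core of the proof is then the purely local claim: the graph obtained from $\Gamma_v$ by deleting the middle vertices of the triangles $T_{2i-1}$, $i\in S$, has a unique perfect matching if $|S|$ is even and no perfect matching if $|S|$ is odd. I would prove this by propagating one bit of information along the chain. In any perfect matching of the punctured chain, each triangle $T_j$ is in one of two states: if its middle vertex survives, it is matched to the left corner or to the right corner of $T_j$; if its middle vertex has been deleted, the two corners are either joined by the chord or are each matched outward along a connecting edge. Encoding the state of $T_j$ by the pair $(\ell_j,r_j)\in\{0,1\}^2$ recording whether the left and right connecting edges at $T_j$ are used, one checks that a surviving middle forces $\ell_j+r_j=1$ while a deleted middle forces $\ell_j=r_j$; since a connecting edge shared by $T_j$ and $T_{j+1}$ gives $r_j=\ell_{j+1}$, and since $\ell_1=r_{2d}=0$ at the two ends of the chain, the bit $\ell_j$ is determined for all $j$, hence so is the whole matching, and the boundary condition $r_{2d}=0$ holds exactly when the number $2d-|S|$ of surviving middles -- equivalently $|S|$ -- is even. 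The only points needing a little care are the end triangles $T_1$ and $T_{2d}$, whose admissible states are already restricted by $\ell_1=0$ and $r_{2d}=0$, and the bookkeeping of loop-edges; both are routine.

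Granting the local claim, the bijection drops out. Given an even set $E'\subseteq E(G)$, put a non-gadget edge of $G^\sigma$ into $M$ iff the corresponding edge of $G$ lies in $E'$; at each $v$ the number of external connection points of $\Gamma_v$ thereby occupied equals the (necessarily even) degree of $v$ in $E'$, so by the local claim each $\Gamma_v$ completes to a perfect matching in exactly one way, and these completions together with the chosen non-gadget edges form a perfect matching of $G^\sigma$ extending $E'$. Conversely, for any perfect matching $M$ of $G^\sigma$ the set $E':=M\cap E(G)$ meets each $\Gamma_v$ in an even number of connection points by the ``only if'' half of the local claim, so $E'$ is even, and $M$ restricted to each gadget is forced, so $M$ is exactly the completion of $E'$ just constructed. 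The two maps are manifestly mutually inverse. The sole real obstacle is the gadget analysis of the previous paragraph; everything else is formal.
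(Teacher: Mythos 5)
The paper cites Fisher and states Proposition~\ref{thm.delta} without proof, so there is no in-paper argument to compare yours against. Your proof is correct and self-contained. The reduction to a local claim per gadget is the natural one, and the local claim is handled cleanly: identifying $\Gamma_v$ as a chain of $2d$ triangles whose odd-indexed middles are the external attachment points, and then propagating the boolean state $\ell_j$ (``is the connector to the left of $T_j$ used?'') along the chain, with a surviving middle forcing $\ell_j\neq r_j$ and a removed middle forcing $\ell_j=r_j$. I verified the forcing rules directly: if the middle $v_{3j-1}$ is present and the left connector is unused, then $v_{3j-2}$ must take the path edge to the middle (the chord would leave the middle unmatched) and $v_{3j}$ must take the right connector; symmetrically for $\ell_j=1$; if the middle is removed, the two corners either take the chord ($\ell_j=r_j=0$) or both connectors ($\ell_j=r_j=1$). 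Together with $\ell_1=r_{2d}=0$ this gives the unique determination and the parity criterion $2d-|S|\equiv 0\pmod 2$, i.e.\ $|S|$ even, which is exactly evenness of $E'$ at $v$ (loops counting twice, as you note). Assembling the per-vertex conclusions into the global bijection is then immediate and you have stated it correctly. No gaps.

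Two very small presentational points. First, it would help to state explicitly that the middle vertex of $T_{2i-1}$ is $v_{6i-4}$ and that no vertex of an even-indexed triangle carries an external edge (since $6i-1\neq 6i'-4$ for all $i,i'$); both are quick index checks but they are exactly the place a reader might stumble. Second, in the forced-state analysis it is worth saying in one line why the chord cannot be used when the middle survives (it would strand the middle), since that is the step that makes the ``surviving middle $\Rightarrow \ell_j\neq r_j$'' implication an equivalence rather than merely one direction.
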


It follows that if we set the indeterminates associated to the edges
of the gadgets equal to one in $\P_{G_{}^\sigma}$, we get
the even subgraph polynomial of our original graph $G$: 
\begin{equation}\lbl{E=P}\E_G\ =\ \P_{G_{}^\sigma}\Big|
  _{\textstyle{x_e=1 \ \forall  e\in
  E(G_{}^\sigma) \backslash E(G)}}
\end{equation}

If $D$ is an orientation of $G_{}^\sigma$, we define 
$$F_D^\sigma(x) = \Pfaf \Big(A(G_{}^\sigma,D)\Big| _{\textstyle{x_e=1 \ \forall e\in
  E(G_{}^\sigma) \backslash E(G)}}\Big)$$ 
  Any
polynomial obtained in this way will be called a {\em
  $\sigma$-projected Pfaffian}. Note that $F_D^\sigma(x)$ is a polynomial in the
indeterminates associated to the edges of the original graph $G$.

\begin{remark} Here, as before, we need to
choose an ordering of the
vertices of $G^\sigma$ to define the adjacency matrix. We may, of
course, take the ordering induced in the obvious way from an ordering
of the vertices of $G$. In any case, permuting the
ordering will only affect the sign of $F_D^\sigma(x)$: it gets
multiplied by the sign of the permutation.
\end{remark}

Now assume $G$ is 
embedded into an orientable surface $\Sigma$ of genus $g$. It is not
hard to see that we can choose $\sigma$ in such a way that $G^\sigma$
also embeds into $\Sigma$. 
In view of (\ref{E=P}), Theorem~\ref{Pfaff-formula} implies the
following result for $\E_G$:  

\begin{theorem} [Galluccio-Loebl \cite{GL2}] \lbl{Pf2} If $G$
embeds into an orientable surface of genus $g$, then for an
appropriate choice of blow-up  $G^\sigma$, there exist $4^g$
orientations $D_i$ ($i=1,\ldots, 4^g$) of $G^\sigma$ such that the
even subgraph
polynomial $\E_G(x)$ can be
expressed as a linear combination  of the $\sigma$-projected Pfaffians 
$F_{D_i}^\sigma(x)$.
\end{theorem}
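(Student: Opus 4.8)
The plan is to obtain Theorem~\ref{Pf2} as an essentially immediate corollary of Theorem~\ref{Pfaff-formula}, applied to the blow-up $G^\sigma$, combined with the specialization identity~(\ref{E=P}). The only point that genuinely requires an argument is the claim, announced just above the statement, that for a suitable choice of $\sigma$ the blow-up $G^\sigma$ embeds into the same surface $\Sigma$ as $G$.

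First I would make that claim precise. Fix an embedding of $G$ in $\Sigma$; it determines, for each vertex $v$, a cyclic ordering of the half-edges incident with $v$. Choose $\sigma_v$ to be any linear ordering refining this cyclic ordering (cut the cycle at an arbitrary place). With this choice I claim $G^\sigma$ embeds in $\Sigma$. The reason is that the gadget $\Gamma_v$ is a ``chain of triangles'': it consists of the path $v_1 v_2\cdots v_{6d}$ together with the chords $v_{3j-2}v_{3j}$ ($j=1,\dots,2d$), each chord completing a triangle on the consecutive triple $v_{3j-2},v_{3j-1},v_{3j}$, and successive triangles being joined by a single path-edge. Such a graph is outerplanar: draw the path as an arc and let the triangles bulge to one side, so that $\Gamma_v$ sits inside a closed disk $B_v$ with the $d$ attaching vertices $v_{6i-4}$ ($i=1,\dots,d$) --- the ones carrying the legs corresponding to $e_1,\dots,e_d$ --- appearing on $\partial B_v$, in this linear order. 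Now take $B_v$ to be a small closed disk around $v$ in $\Sigma$, excise its interior (deleting $v$ and truncating the incident edges), and glue in this embedded copy of $\Gamma_v$, matching the legs with the truncated half-edges in the cyclic order prescribed by the embedding; the fact that $\sigma_v$ refines that cyclic order is exactly what makes the gluing consistent around $\partial B_v$. Performing this simultaneously at every vertex yields an embedding of $G^\sigma$ in $\Sigma$. The case of loop-edges is handled the same way, $\sigma_v$ being a linear ordering of half-edges and a loop contributing two legs.

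Granting this, I would apply Theorem~\ref{Pfaff-formula} to the graph $G^\sigma$, which embeds into a surface of genus $g$: there are $4^g$ orientations $D_1,\dots,D_{4^g}$ of $G^\sigma$ and scalars $\lambda_1,\dots,\lambda_{4^g}$ such that
$$\P_{G^\sigma}(x)=\sum_{i=1}^{4^g}\lambda_i\,F_{D_i}(x)$$
as an identity of polynomials in the edge variables of $G^\sigma$. I would then substitute $x_e=1$ for every $e\in E(G^\sigma)\setminus E(G)$: by~(\ref{E=P}) the left-hand side becomes $\E_G(x)$, while by the very definition of the $\sigma$-projected Pfaffian the right-hand side becomes $\sum_i\lambda_i F_{D_i}^\sigma(x)$, with the same coefficients $\lambda_i$ as in the Arf-invariant formula for $\P_{G^\sigma}$. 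This is the asserted expression.

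The only real obstacle is the embedding step; everything after it is formal. Its content is the elementary but slightly fiddly geometric observation that the gadget $\Gamma_v$ can be housed inside a disk with its attaching vertices on the boundary in the order dictated by the rotation system at $v$, so that the blow-up operation is local and genus-preserving. (Alternatively one could quote \cite[Theorem 2.3]{GL2} directly, but the argument above is self-contained given Theorem~\ref{Pfaff-formula}.)
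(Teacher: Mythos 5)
Your proof is correct and follows the same route the paper itself uses for this statement: choose $\sigma$ so that $G^\sigma$ embeds in the same surface $\Sigma$ (by refining the rotation system induced by the embedding of $G$), apply Theorem~\ref{Pfaff-formula} to $G^\sigma$, and specialize via~(\ref{E=P}). The geometric verification you spell out---that the Fisher gadget $\Gamma_v$ fits in a disk with its attaching vertices on the boundary in the order prescribed by $\sigma_v$---is exactly what the paper compresses into ``it is not hard to see,'' and it reappears in more detail in the proof of Corollary~\ref{Pf2delta}.
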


\begin{remark}\lbl{newremark} As was pointed out to us by the referee,
  a similar result can also be obtained using a different and in some sense
  more natural blow-up construction discussed by Kasteleyn in
  \cite[p.102-103]{K}, where every vertex of the original
  graph $G$ is replaced by an even clique. In Kasteleyn's construction
  the
  correspondence between perfect matchings of the blown-up graph and
  even subgraphs of the original graph is not one-to-one, but
  many-to-one; however, with an appropriate choice
  of orientation of the edges of the clique, all but one of the
  perfect matchings corresponding to a fixed even subgraph cancel out
  when signs are taken into account. The drawback of Kasteleyn's
  construction is that the blown-up graph can in general not be
  embedded on the same surface as the original graph. Although the
  complications resulting from this problem can be dealt with, it is
  more appropriate for our purposes to use  Fisher's construction where
  this problem does not arise.
\end{remark}

It turns out that one can always choose the orientations $D_i$ in 
Theorem~\ref{Pf2}
in such a way that the induced orientation on every gadget
$\Gamma_v$ 
is independent of $i$.  (We will explain why this is so in
Section~\ref{sec3}, see Corollary~\ref{Pf2delta}.) This
motivates the following definition.

\begin{definition} Let $\Delta=(\Delta_v)_{v\in V(G)}$ be a choice of
  orientations of the gadgets $\Gamma_v$. An orientation $D$
  of $G^\sigma$ is called {\em $\Delta$-admissible} if $D$ restricts to
  $\Delta_v$ on every gadget $\Gamma_v$.
\end{definition}

Note that once $\Delta$ has been fixed, the set of $\Delta$-admissible
orientations of $G^\sigma$ is in natural bijection with the set of
orientations of the original graph $G$.

We now come to the main result of the paper, which is a lower bound
for the number of orientations needed in the above expression for
$\E_G$, provided we assume that the orientations in question are
$\Delta$-admissible.

\begin{theorem} [Main Theorem] \lbl{MT} Let $G$ be a graph. Choose a blow-up   $G^\sigma$ and an orientation $\Delta$ of the gadgets that
replaced the vertices of $G$ in $G^\sigma$. Let $c_{\sigma,\Delta}(G)$ be the
minimal cardinality of a set of $\Delta$-admissible orientations $D_i$
of $G^\sigma$ such that the even subgraph polynomial $\E_G$ is a linear
  combination of the $\sigma$-projected Pfaffians
  $F^\sigma_{D_i}$. Then $c_{\sigma,\Delta}(G)$ is a power of $4$.
\end{theorem}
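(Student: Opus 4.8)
The plan is to reduce the statement to a purely linear-algebraic fact about the vector space spanned by the $\sigma$-projected Pfaffians of $\Delta$-admissible orientations, together with the mod-$2$ cohomology of the surface. First I would set up the algebraic framework: since $\Delta$ is fixed, the $\Delta$-admissible orientations of $G^\sigma$ are in bijection with orientations of $G$, and one checks that changing an orientation $D$ of $G$ at an edge (or, more intrinsically, by a cocycle) changes $F^\sigma_D$ in a controlled way. The key structural point, which I expect to extract from the Arf-invariant formula behind Theorem~\ref{Pf2} (i.e.\ from \cite{CR,GL2}), is that the map $D \mapsto F^\sigma_D$ factors through $H^1(\Sigma;\BZ/2)$ (once a suitable reference orientation is fixed), and that $\E_G$ lies in the span of the $F^\sigma_D$ precisely because of the Arf-invariant identity over a quadratic form on this $\BZ/2$-vector space. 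So the real content is: given a finite-dimensional $\BF_2$-vector space $W$ (a quotient of $H^1(\Sigma;\BZ/2)$, or of $H^1$ of the relevant subsurface carrying $G$) equipped with a nondegenerate quadratic form $q$, and given that the ``target'' vector $\E_G$ is expressible via the Arf identity $\sum_{w} (-1)^{q(w)} F_w = 2^{?}\,\Arf(q)\,(\cdots)$, determine the minimal number of $w$'s whose $F_w$ can linearly combine to give $\E_G$.

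Next I would prove the lower bound and matching upper bound. For the upper bound, Theorem~\ref{Pf2} already produces $4^g$ orientations, so $c_{\sigma,\Delta}(G) \le 4^g$; more precisely, after passing to the subspace $W$ actually seen by $G$, one gets $c_{\sigma,\Delta}(G) \le 4^{g'} = 2^{\dim W}$ where $q$ restricted there is nondegenerate of that dimension. For the lower bound one argues that the $F^\sigma_{D}$ ranging over a coset of a subgroup $U \le W$ span, together with $\E_G$, a space whose dimension forces $|U|$ to be large unless $U$ is \emph{totally isotropic} for $q$; and on a nondegenerate quadratic $\BF_2$-space, a maximal totally isotropic subspace has dimension exactly half, which is what makes the count come out to a power of $4$ rather than an arbitrary power of $2$. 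Concretely: if $\{D_i\}$ is a minimal solving set, I would show the corresponding $w_i \in W$ must be a full coset of an isotropic subspace $U$ (using that any ``gap'' in the coset lets one delete an element by a linear dependence among Pfaffians, contradicting minimality, while the Arf identity on $U$-cosets is what reproduces $\E_G$), and then $c_{\sigma,\Delta}(G) = |U| = 2^{\dim U}$ with $\dim U = \frac12 \dim W'$ for the appropriate even-dimensional nondegenerate piece $W'$, hence a power of $4$.

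The main obstacle, I expect, is establishing rigorously that a \emph{minimal} $\Delta$-admissible solving set must consist of the Pfaffians indexed by a coset of an isotropic subspace — i.e.\ that one cannot do better by some clever non-coset collection of orientations exploiting linear relations among Pfaffians that are not of Arf type. This requires understanding all $\BZ$-linear relations among the polynomials $F^\sigma_D$ as $D$ ranges over $\Delta$-admissible orientations: one must show these relations are generated by the ``elementary'' Arf/Pfaffian relations coming from four orientations differing by a $2$-dimensional subspace on which $q$ is nondegenerate (the ``local'' genus-one relation), and then that $\E_G$, being a genuine positive generating function, cannot be hit by a smaller isotropic configuration. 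I would handle this by a careful specialization argument (setting edge variables to values that detect individual even subgraphs, as in \cite{GL2}) to pin down exactly which $F^\sigma_D$ a given even subgraph's monomial appears in, turning the minimality question into the combinatorial-geometric statement about isotropic subspaces of $(W,q)$ quoted above. Once that dictionary is in place, the conclusion $c_{\sigma,\Delta}(G)=2^{\dim U}$ with $\dim U$ half of an even number is immediate.
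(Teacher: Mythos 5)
Your lower-bound mechanism is not the right one, and it would in fact produce the wrong count. You propose that a minimal solving set of orientations must correspond (after translating $D\mapsto q_D$) to a coset of a totally isotropic subspace $U$ of a $2g$-dimensional symplectic $\BF_2$-space $W$, and conclude $c_{\sigma,\Delta}(G)=2^{\dim U}=2^{\frac12\dim W'}$. But a maximal isotropic subspace of a $2g$-dimensional space has dimension $g$, so this would give $c_{\sigma,\Delta}(G)=2^{g}$, whereas the correct value (which the paper actually establishes, not merely that it is \emph{some} power of $4$) is $4^{g}=2^{2g}$. Your upper bound $4^{g'}=2^{\dim W}$ and your lower bound $2^{\frac12\dim W'}$ are internally inconsistent, which already signals the problem. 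The isotropic-subspace picture is not what governs this question.

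The actual argument is different and more direct. After fixing a reference crossing orientation $D_0$ and choosing a section $\C\subset\E(G)$ of the surjection $C_1(G;\BF_2)\twoheadrightarrow H=H_1(\Sigma_g;\BF_2)$, one shows that each $\Delta$-admissible $D_i$ acts on $E'\in\C$ through a quadratic form $q_i\in Q$ via $\sign(E',F^\sigma_{D_i})=\varepsilon_0(-1)^{q_i([E'])}$. The hypothesis $\E_G=\sum_i\lambda_i F^\sigma_{D_i}$ then forces $\sum_i\lambda_i\varepsilon_0(-1)^{q_i(x)}=1$ for all $x\in H$. The $4^g\times4^g$ matrix $\bigl((-1)^{q(x)}\bigr)_{(q,x)\in Q\times H}$ is invertible (an orthogonality-of-characters computation after factoring out $\diag\bigl((-1)^{q_0(x)}\bigr)$), so the expansion of the constant function $1$ in the basis $\{(-1)^{q(\cdot)}\}_{q\in Q}$ is unique, and by Lemma~\ref{2.7} that expansion has \emph{all} coefficients $\alpha_q=2^{-g}(-1)^{\Arf(q)}$ nonzero. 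Hence every $q\in Q$ must occur among the $q_i$, giving $k\ge|Q|=4^g$. No isotropic subgroup or classification of linear relations among Pfaffians is needed; you need the full set $Q$, and the ``power of $4$'' is simply $|Q|=4^{g(G,\sigma,\Delta)}$. The step you flag as ``the main obstacle'' (that a minimal solving set must be an isotropic coset) is not merely hard to prove — it is false as stated, because the true minimal solving set is all of $Q$.
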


Let us denote by $\pev(G)$ the minimum of the numbers
$c_{\sigma,\Delta}(G)$, over all choices of $\sigma$ and $\Delta.$ In
view of the relationship of even subgraphs with the Ising model, we
call $\pev(G)$ the {\em Ising complexity} of $G$.

\begin{theorem}[Main Theorem (Cont'd)] \lbl{MT2} For every graph $G$, the Ising complexity
  satisfies $$ \pev(G)=4^g~,$$ where the number $g$ is the embedding
  genus of
$G$.
\end{theorem}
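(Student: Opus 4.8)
We establish $\pev(G)\le 4^{g}$ and $\pev(G)\ge 4^{g}$ separately. For the upper bound, fix an embedding of $G$ in an orientable surface of minimal genus $g$ and choose the blow-up $\sigma$ so that $G^\sigma$ embeds in the same surface (possible, as recalled before Theorem~\ref{Pf2}). By Theorem~\ref{Pf2}, $\E_G$ is a linear combination of $4^{g}$ $\sigma$-projected Pfaffians, and by Corollary~\ref{Pf2delta} the $4^{g}$ orientations involved may be taken $\Delta$-admissible for one common $\Delta$. Hence $c_{\sigma,\Delta}(G)\le 4^{g}$ for this $(\sigma,\Delta)$, so $\pev(G)\le 4^{g}$.

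For the lower bound, recall the Kasteleyn/Arf dictionary. Fixing $\Delta$, each $\Delta$-admissible orientation $D$ of $G^\sigma$ gives a sign function $\epsilon_D\colon\mathcal E(G)\to\{\pm1\}$ recording the sign, with respect to $D$, of the perfect matching of $G^\sigma$ corresponding to an even subgraph (Proposition~\ref{thm.delta}), and $\epsilon_D=\epsilon_D(\emptyset)\,(-1)^{q_D}$ for a $\BZ/2$-quadratic form $q_D$ on the cycle space $\mathcal E(G)$ whose associated (automatically alternating) bilinear form $\beta_{\sigma,\Delta}$ is independent of $D$; moreover, as $D$ varies, $q_D$ ranges over all quadratic refinements of $\beta_{\sigma,\Delta}$. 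Comparing coefficients, $\E_G$ is a linear combination of $\sigma$-projected Pfaffians $F^\sigma_{D_i}$ exactly when the constant function $1$ on $\mathcal E(G)$ is the corresponding linear combination of the functions $(-1)^{q_{D_i}}$. The $2^{\dim\mathcal E(G)}=|\mathcal E(G)|$ functions $(-1)^{q}$, for $q$ a refinement of $\beta_{\sigma,\Delta}$, are pairwise orthogonal in $\BR^{\mathcal E(G)}$, hence form a basis; in the unique expansion of $1$ in this basis, the coefficient of $(-1)^{q}$ is a multiple of the Gauss sum $\sum_{E'\in\mathcal E(G)}(-1)^{q(E')}$, which is nonzero precisely when $q$ vanishes on the radical of $\beta_{\sigma,\Delta}$. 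There are $4^{h}$ such $q$, where $2h=\operatorname{rank}_{\BZ/2}\beta_{\sigma,\Delta}$, so $c_{\sigma,\Delta}(G)=4^{h(\sigma,\Delta)}$ with $h(\sigma,\Delta)=\tfrac12\operatorname{rank}_{\BZ/2}\beta_{\sigma,\Delta}$ (re-proving Theorem~\ref{MT}). With the upper bound, Theorem~\ref{MT2} reduces to the inequality $h(\sigma,\Delta)\ge g$ for every $(\sigma,\Delta)$.

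To prove $h(\sigma,\Delta)\ge g$, realize $(\sigma,\Delta)$ by a surface: the linear orders $\sigma_v$ and the gadget orientations $\Delta_v$ assemble into a rotation system on $G^\sigma$ in which each gadget $\Gamma_v$ lies inside an embedded disk whose boundary edges leave in the cyclic order coming from $\sigma_v$, hence an embedding of $G^\sigma$ into a closed orientable surface $\Sigma_{\sigma,\Delta}$. One checks that $\beta_{\sigma,\Delta}$ is exactly the pullback to $\mathcal E(G)\cong\P(G^\sigma)$, along $E'\mapsto[M(E')\mathbin{\triangle}M(\emptyset)]$, of the $\BZ/2$-intersection form of $\Sigma_{\sigma,\Delta}$, and that this map is onto $H_1(\Sigma_{\sigma,\Delta};\BZ/2)$ (contract the gadgets, and use that a ribbon-graph embedding is cellular); hence $\operatorname{rank}_{\BZ/2}\beta_{\sigma,\Delta}=\dim_{\BZ/2}H_1(\Sigma_{\sigma,\Delta};\BZ/2)=2\,g(\Sigma_{\sigma,\Delta})$, i.e.\ $h(\sigma,\Delta)=g(\Sigma_{\sigma,\Delta})$. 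Since each gadget $\Gamma_v$ is planar and confined to a disk of $\Sigma_{\sigma,\Delta}$, contracting all gadgets yields an embedding of $G$ itself into $\Sigma_{\sigma,\Delta}$, so $g\le g(\Sigma_{\sigma,\Delta})=h(\sigma,\Delta)$. Hence $\min_{\sigma,\Delta}h(\sigma,\Delta)=g$ and $\pev(G)=4^{g}$.

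The step I expect to be the main obstacle is the identification in the last paragraph of the algebraically defined form $\beta_{\sigma,\Delta}$ --- coming from the Pfaffian sign rules and the combinatorics of Fisher's gadget --- with the topological intersection form of the ribbon surface $\Sigma_{\sigma,\Delta}$, together with the surjectivity onto $H_1$: this is exactly where the Kasteleyn sign conventions must be reconciled with the topology, and where one must also check that contracting the gadgets changes neither the surface nor the genus.
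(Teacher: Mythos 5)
Your proposal is correct and reaches the same conclusion, but the lower-bound argument is organized in a genuinely different way from the paper's. The paper (Theorem~\ref{4.1}) fixes a section $\C$ of the surjection from the cycle space onto $H=H_1(\Sigma_g;\BF_2)$, pushes the orientation data down to quadratic forms $q_i$ on $H$, and then appeals to two separate facts: the Arf invariant formula (Lemma~\ref{2.7}), which exhibits one expansion of the constant function $1$ in terms of the $(-1)^{q}$ with all $4^g$ coefficients nonzero, and the nonsingularity Lemma~\ref{nond}, which shows that expansion is unique. You instead stay on the full cycle space $\mathcal{E}(G)$ and observe that the $(-1)^q$, as $q$ ranges over the $|\mathcal{E}(G)|$ quadratic refinements of the common bilinear form $\beta_{\sigma,\Delta}$, form an orthogonal basis of $\BR^{\mathcal{E}(G)}$; the coefficient of $(-1)^q$ in the expansion of $1$ is then a Gauss sum, which is nonzero exactly when $q$ kills the radical of $\beta_{\sigma,\Delta}$, and there are precisely $4^{(\mathrm{rank}\,\beta_{\sigma,\Delta})/2}$ such $q$. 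This yields the exact count $c_{\sigma,\Delta}(G)=4^{h(\sigma,\Delta)}$ in a single step, without choosing a section $\C$ and without invoking the Arf invariant formula as such — it also makes transparent why Norine-type phenomena (a $c$ that is not a power of $4$) cannot occur here. The price is that you must still import the two topological facts the paper proves in Sections~2--3: that the crossing orientation $D_0$ really does make $q_{D_0}$ a quadratic form (this is Tesler's theorem plus Prop.~\ref{2.4} applied to the special drawing, and is needed to know $\beta_{\sigma,\Delta}$ is well-defined and is the pullback of the surface intersection form), and that the cycle space of $G$ surjects onto $H_1(\Sigma_{\sigma,\Delta};\BF_2)$ (this is the cellularity of the ribbon-graph embedding from Prop.~\ref{surf}). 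You correctly flag these as the main obstacles; they are supplied verbatim by the paper's Section~2--3 machinery. The remaining identification $h(\sigma,\Delta)=g(G,\sigma,\Delta)$, and the minimization over $(\sigma,\Delta)$ to get the embedding genus, coincide with the paper's argument via Corollary~\ref{Pf2delta}.
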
 

Here, the {\em embedding genus} of $G$ is the 
  minimal genus of an orientable
  surface in which $G$ can be embedded.

The proof of Theorems~\ref{MT} and~\ref{MT2} will be given in
Section~\ref{sec4}. 

Let us end the Introduction by pointing out some relations of our results with other topics.

\subsection{Equivalence of $\E_G(x)$ and the Ising partition function}
\lbl{sub.eq}
The Ising partition function is defined by 
$$
Z_G^{\mathrm{Ising}}(\beta) = Z^{\mathrm{Ising}}_G(x)\Big|
  _{\textstyle{x_e:=e^{\beta J_e} \ \forall  e\in
  E(G)}}
$$
 where the $J_e$ $( e\in E(G))$ are weights (coupling constants)
 associated with the edges of the graph $G$, the parameter $\beta$ is the inverse
 temperature,  and
$$ 
Z_G^{\mathrm{Ising}}(x)= \sum_{\sigma:V(G)\rightarrow \{1,-1\}} \  \prod_{e= \{u,v\}\in E(G)}x_e^{\sigma(u)\sigma(v)}.
$$
The theorem of van der Waerden \cite{vdW} (see \cite[Section 6.3]{L}
for a proof) states that $Z_G^{\mathrm{Ising}}(x)$ is the same as $\E_G(x)$ up to
change of variables and 
multiplication by a constant factor: 

$$
Z_G^{\mathrm{Ising}}(x)= 2^{|V(G)|}\left(\prod_{e\in E(G)}\frac{x_e+ x_e^{-1}}{2}\right) \E_G(z)\Big|
  _{\textstyle{z_e:= \frac{x_e- x_e^{-1}}{x_e+ x_e^{-1}}}}
$$

\subsection{Determinantal complexity}
\lbl{sub.detcompl}
For a polynomial $P(x_1,\ldots, x_n)$ with rational coefficients, let
the {\em determinantal complexity} of $P$, denoted by $\cdet(P)$, be
the  minimum $m$
so that, if $A$ is the $m\times m$ matrix of variables $(x_{ij})_{i,j= 1,\ldots, m}$, then $P$
may be obtained from the determinant $\det(A)$ by a number of applications of the operation of
replacing some variable $x_{ij}$ by a variable $x_k$ or by a rational constant. This concept was
introduced by Valiant (see \cite{V}) who also proved that $\cdet(P)$ is
at most $2\cform(P)+2$; here $\cform(P)$
denotes the {\em formula size} of $P$, {\em i.e.} the minimum number of additions and multiplications
one needs to obtain $P$ starting from the variables $x_1,\ldots,
x_n$ and constants.  The main  problem in the area of algebraic complexity theory is
to find lower bounds for $\cform(P)$. Lower bounds for $\cdet(P)$ have
recently been investigated extensively (see {\em e.g.} \cite{MR}). We suggest to 
 study $\cdet(\E(G))$ using the methods introduced in the present paper.

\subsection{Pfaffian graphs}
\lbl{sub.pfaf}
It follows from Theorem \ref{MT2} that $\pev(G)=1$ if and only if $G$ is planar. This characterises
the graphs for which the Ising partition function 
$Z_G(x)$ 
is equal to one Pfaffian, in the sense of
Theorems ~\ref{MT} and~\ref{MT2}. 
Note that this characterisation can be formulated in terms of excluded
minors, by Kuratowski's theorem. It also provides a polynomial
algorithm to recognize the graphs $G$ for which $\pev (G)=1$,
since
planar graphs can be recognized in polynomial time.
We remark that it remains a longstanding open problem to characterise 
the {\it Pfaffian graphs}, {\em i.e.} graphs $G$ satisfying 
$\pNor(G)=1$, in a way which yields
a
polynomial recognition algorithm  (see \cite{Th}). 

\subsection{Even drawings} Norine \cite{N} has considered drawings $\varphi$ of a
  graph $G$ on an orientable  surface
  $\Sigma$  such that the self-intersection number of every perfect
  matching $M$ of $G$ in this drawing  is even. (Contrary to our
  definition of drawings (see \ref{2.1} below), he does not, however, allow edges to
  self-intersect.) Let us call a drawing satisfying Norine's definition
  {\em matching-even}. As pointed out by Norine, the Arf invariant
  formula for perfect matchings (Theorem \ref{Pfaff-formula}) goes through if
  we have a matching-even drawing of a graph on a surface in place of
  an embedding. Moreover, Norine has shown that $\pNor(G)=1$ if and only if $G$
has a matching-even drawing in the plane, and $\pNor(G)=4$ if and only if $G$
has a matching-even drawing on  the torus. 
It is, however, conceivable that, in general, the minimal genus of
  a surface supporting a matching-even drawing of $G$ could be smaller
  than the  embedding genus. We will point out  that no such
  phenomenon can occur for even subgraphs (see Section~\ref{sec5} and
  Theorem~\ref{55} for a precise statement).

\section{The Arf invariant formula for perfect matchings}\lbl{sec2}

In this section, we give a proof of the Arf invariant formula for
the perfect matching polynomial.  Other proofs can be found in \cite{GL1,T,CR}.

\begin{definition}\lbl{2.1} A {\em drawing} of 
a finite graph
$G$ on a surface $\Sigma$ is a continuous
  and piecewise
  smooth map 
$\varphi$ from the topological realization of $G$ (as 
a one-dimensional CW-complex) to $\Sigma$
so that $\varphi$ 
is injective
  except for a finite number of transverse double points, subject to
  the condition that for every double point
  $p=\varphi(x)=\varphi(x')$, none of the preimages $x$ and $x'$ is a
  vertex. 
\end{definition}
 In other words, all intersections in the drawing happen in the
 interiors of edges. Note that we allow self-intersections of edges in
 this definition. A drawing without double points is called an
 {\em embedding.} 

If a drawing $\varphi$ of $G$ is given, and $E'\subset E(G)$ is a collection of
edges, we denote by $\kappa_\varphi(E')$ the number $\pmod 2$ of double points
of $\varphi(E')$. 
Note that if 
$E'=\{e_1, \ldots, e_k\}$ is a
collection of distinct edges, then 
\begin{equation}
\lbl{quad1}
\kappa_\varphi(E')=\sum_i \kappa_\varphi(e_i) 
+ \sum_{i<j} 
\cross_\varphi(e_i,e_j)
\ \pmod 2~, 
\end{equation} 
where $\cross_\varphi(e_i,e_j)$ is the number of intersections of the
interiors of the
edges $e_i$ and $e_j$ in the drawing. 
We emphasize that vertices of the graph never count as intersection 
points. 

We will use the following result of Tesler.

\begin{ttheorem}[Tesler \cite{T}]
\lbl{thm.tes}
Let $G$ be a graph drawn in the plane. Then there is 
$\varepsilon_0\in\{\pm 1\}$ and an orientation $D_0$ of $G$ so that for
every perfect matching $M$ of $G$, its sign in  $\Pfaf
(A(G,D_0))$ satisfies 
\begin{equation}\lbl{tes}
\sign(M,D_0)=\varepsilon_0 (-1)^{\kappa_\varphi(M)}~. 
\end{equation}
\end{ttheorem}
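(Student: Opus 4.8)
The plan is to reduce the statement to the classical fact that the Pfaffian of the skew-symmetric adjacency matrix can be computed from any single perfect matching by following ``alternating cycles,'' and then to check that the sign contributed by each alternating cycle is governed by a parity that matches $\kappa_\varphi$. Concretely, I would first fix a reference perfect matching $M_0$ (assuming one exists; otherwise both sides are empty and there is nothing to prove) and recall the standard expansion: for any other perfect matching $M$, the symmetric difference $M\triangle M_0$ is a disjoint union of even cycles $C_1,\dots,C_t$, and $\sign(M,D)/\sign(M_0,D)=\prod_k \varepsilon(C_k,D)$, where $\varepsilon(C_k,D)=\pm 1$ is determined by the orientation $D$ restricted to $C_k$ together with the positions of the $M_0$-edges. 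So it suffices to choose $D_0$ so that, for every such alternating cycle $C$ arising from \emph{some} pair of perfect matchings, $\varepsilon(C,D_0)=(-1)^{\kappa_\varphi(C)}$; then setting $\varepsilon_0=\sign(M_0,D_0)(-1)^{\kappa_\varphi(M_0)}$ and using the bilinearity relation \eqref{quad1} modulo $2$ (the cross terms between the cycles and between $M$ and $M_0$ all cancel because each edge appears an even number of times, and the self-intersection terms add up) gives \eqref{tes}.

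Next I would make the choice of $D_0$ explicit and local. Since $G$ is drawn in the plane, the drawing gives each edge a well-defined (mod $2$) self-intersection count $\kappa_\varphi(e)$, and for a cycle $C=e_1\cdots e_{2\ell}$ bounding (in the drawing) one has a formula for $\kappa_\varphi(C)$ in terms of the winding/rotation number of the closed curve $\varphi(C)$ together with the edge self-intersections; the key classical input is the Whitney–Graustein type computation that for a \emph{transversally immersed} closed curve in the plane, the number of double points mod $2$ equals $1+$(turning number) mod $2$. I would orient $G$ by a process that makes each face boundary ``oddly oriented'' in the sense of Kasteleyn: orient the edges so that, walking counterclockwise around each bounded face, the number of edges traversed backwards is odd. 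Such an orientation exists for any planar graph (this is Kasteleyn's lemma, proved by building the orientation face by face across a spanning tree). The heart of the matter is then to verify that for a Kasteleyn orientation $D_0$, the alternating-cycle sign $\varepsilon(C,D_0)$ equals $(-1)^{(\text{number of vertices strictly inside }C)}(-1)^{\kappa_\varphi(C)}$, and that when $C$ is an alternating cycle for two perfect matchings the number of enclosed vertices has the same parity as its contribution to $\kappa_\varphi$; combining over all cycles and absorbing the constants yields the theorem.

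The main obstacle, and the step I expect to require the most care, is precisely this last verification: relating the combinatorial Kasteleyn sign of an alternating cycle to its geometric self-intersection parity $\kappa_\varphi(C)$ in an \emph{arbitrary} drawing (not an embedding), where edges may self-intersect and cross each other many times. One has to track how the turning number of $\varphi(C)$, the edge self-intersection counts $\kappa_\varphi(e_i)$, and the pairwise crossings $\cross_\varphi(e_i,e_j)$ interact, and confirm that the ``extra'' crossings introduced by a non-embedded drawing contribute in pairs (because the left and right sides of \eqref{tes} are both unchanged, mod $2$ on the exponent, under a generic homotopy of the drawing that creates or destroys a pair of double points). I would handle this by first proving the statement for a fixed planar \emph{embedding} of $G$ (the classical Kasteleyn case, where $\kappa_\varphi\equiv 0$ on every perfect matching and \eqref{tes} reduces to Kasteleyn's sign theorem), and then showing invariance of both sides under an elementary move on the drawing (a finger move creating two new double points on an edge, or sliding a strand across another), which changes $\kappa_\varphi(M)$ for the relevant matchings $M$ by an even amount and leaves $\sign(M,D_0)$ untouched. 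Since any two drawings of $G$ in the plane are connected by a sequence of such moves together with an ambient isotopy, this propagates \eqref{tes} from the embedded case to all drawings.
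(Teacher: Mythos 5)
The paper itself does not prove Theorem~\ref{thm.tes}; it cites Tesler \cite{T} (see also the proof of the lemma in Section~\ref{sec3}, which refers to \cite[Section~6]{T}), so there is no in-paper argument to compare against and your proposal must be judged on its own. Your framework --- compare an arbitrary matching $M$ to a reference $M_0$ via alternating cycles and check the sign cycle by cycle --- is the right starting point, but the concluding step has a genuine gap. You propose to first establish \eqref{tes} for a planar \emph{embedding} of $G$ (where it reduces to Kasteleyn's sign theorem) and then propagate to arbitrary drawings by elementary moves that create or destroy pairs of double points. This cannot work as stated, for two independent reasons. First, the theorem concerns an \emph{arbitrary} graph drawn in the plane; if $G$ is non-planar there is no planar embedding to start from, and this is exactly the case needed in the rest of the paper (the blow-ups $G^\sigma$ are drawn in the plane via the immersion $\Phi$ of the highway surface and typically have unavoidable crossings). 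Second, the moves you permit --- a finger move on one edge, or a two-strand finger move --- change every $\kappa_\varphi(e)$ and every $\cross_\varphi(e,e')$ by an \emph{even} amount, so they preserve the entire system of mod-$2$ crossing data; the only drawings reachable from an embedding by such moves are those with $\kappa_\varphi(M)\equiv 0$ for all $M$, which is precisely the trivial case. To reach a drawing where some $\kappa_\varphi(e)$ or $\cross_\varphi(e,e')$ is odd one needs a kink move or a single-crossing strand pass, and under such a move $(-1)^{\kappa_\varphi(M)}$ genuinely flips for some $M$ while, with $D_0$ held fixed as in your argument, $\sign(M,D_0)$ does not. A correct propagation would have to modify $D_0$ in lockstep with each parity-changing move, and establishing that a compatible modification always exists is essentially the content of Tesler's construction; it is not supplied here.

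There is also a smaller gap earlier on: you assert that in passing from $\kappa_\varphi(M)-\kappa_\varphi(M_0)$ to $\sum_k\kappa_\varphi(C_k)$ ``the cross terms all cancel.'' Expanding both sides with \eqref{quad1}, the discrepancy mod $2$ is the number of crossings between the union of alternating cycles $M\triangle M_0$ and the common part $M\cap M_0$, together with the crossings between the $M$-side and the $M_0$-side of each cycle. These do vanish mod $2$ in the plane, but only via a Jordan-curve / $H_1(\mathbb{R}^2;\BF_2)=0$ argument applied to \emph{closed} curves, and several of the pieces you are comparing are open arcs with endpoints at vertices. The cancellation uses that these arcs pair up (together with cycle halves) into closed curves; unwinding that is exactly what produces the ``vertices enclosed'' term you invoke later, so the cross terms are not disposable for free, and presenting the cancellation as automatic obscures where the real work in Tesler's proof actually happens.
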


\begin{definition}[Tesler] An orientation $D_0$ satisfying (\ref{tes})
   is called a {\em crossing orientation.}
\end{definition}

We now describe how an embedding of a graph in a surface can be used
to make a planar drawing of that graph of a special kind. First,
recall the following standard description of a genus $g$ surface $S_g$
with one boundary component.  (We reserve the notation $\Sigma_g$ for
a closed surface of genus $g$.)

\begin{definition}
\lbl{def.highway}
The {\em highway surface} $S_g$ consists of a base polygon $R_0$ and bridges $R_1, \ldots, R_{2g}$, where 
\begin{itemize}
 \item 
$R_0$ is a convex $4g-$gon with vertices $a_1, \ldots, a_{4g}$ numbered clockwise,
\item
Each $R_{2i-1}$ is a 
rectangle
with vertices $x(i,1), \ldots, x(i,4)$ numbered clockwise. It is glued
with $R_0$ so that its edge $[x(i,1),x(i,2)]$ is identified with the edge $[a_{4(i-1)+1},a_{4(i-1)+2}]$
and the edge $[x(i,3),x(i,4)]$ is identified with the edge $[a_{4(i-1)+3},a_{4(i-1)+4}]$,
\item
Each $R_{2i}$ is a 
rectangle
with vertices $y(i,1), \ldots, y(i,4)$ numbered clockwise. It is glued
with $R_0$ so that its edge $[y(i,1),y(i,2)]$ is identified with the edge $[a_{4(i-1)+2},a_{4(i-1)+3}]$
and the edge $[y(i,3),y(i,4)]$ is identified with the edge
$[a_{4(i-1)+4},a_{4(i-1)+5}]$. (Here, indices are considered modulo $4g$.)
\end{itemize}
\end{definition}

There is an orientation-preserving 
immersion $\Phi$ of $S_g$
into the plane which is injective except that for each $i=1, \ldots
g$, the images of the bridges
$R_{2i}$ and $R_{2i-1}$ intersect in a square. 

Now assume the graph $G$ is embedded into a closed orientable surface $\Sigma_g$
of genus $g$. We think of $\Sigma_g$ as $S_g$ union an additional disk
$R_\infty$ glued to the boundary of $S_g$. By an isotopy of the embedding,
we may assume that $G$ does not meet the disk $R_\infty$, and that,
moreover, all
vertices of $G$ lie in the interior of $R_0$. We may also assume that
the intersection of $G$ with any of the rectangular bridges $R_i$
consists of disjoint straight
lines connecting the two sides 
of $R_i$ which are glued to the base bolygon $R_0$.
(This last assumption is not really needed, but it
makes the proof of Prop.~\ref{2.4} below more transparent.)  If we now compose the embedding of $G$
into 
$S_g$
with the immersion $\Phi$, we get a drawing $\varphi$ of
$G$ in the plane. A planar drawing of $G$ obtained in this way will be
called {\em special}.
Observe that double points of a
special drawing can only come from the intersection of the images of
bridges under the immersion $\Phi$ of $S_g$ into the plane. Thus every double
point of a special drawing lies in one of the squares
$\Phi(R_{2i})\cap \Phi(R_{2i-1})$.

We now explain how a special drawing can be used to get a homological
expression for the sign of a perfect matching. 
Let $H=H_1(\Sigma_g; \BF_2)$ be the first homology group of $\Sigma_g$
with coefficients in the field $\BF_2$. We have canonical isomorphisms
$H\cong H_1(S_g; \BF_2)\cong H_1(S_g,R_0;\BF_2)$. This gives us a
basis 
\begin{equation}\lbl{basis} a_1, b_1, \ldots , a_g,b_g
\end{equation}
 of $H$, where $a_i$ corresponds
to the class of the bridge $R_{2i-1}$ in $H_1(S_g,R_0;\BF_2)$, and
$b_i$ corresponds to the class of $R_{2i}$.
Recall that $H$ has a non-degenerate (skew-)symmetric bilinear form
called the$\pmod 2$ {\em intersection form}. (The name comes from the
fact that this form can be defined
using intersection numbers of closed curves on the surface.) We let
$\cdot$ denote this form.
In the basis (\ref{basis}),  
it 
is given by 
\begin{eqnarray}\lbl{intform}
a_i\cdot a_j=b_i\cdot b_j&=&0 \\
\lbl{intform2}
a_i\cdot
b_j&=&\delta_i^j  
\end{eqnarray}
  for all $i,j=1,\ldots, g$.

\begin{definition}  A {\em quadratic form on $(H,\cdot)$}
is a function $q:H\rightarrow \BF_2$ so
  that 
\begin{equation}\lbl{quad} q(x+y)=q(x)+q(y)+x\cdot y \ \ (x,y \in H)~.
\end{equation}
\end{definition}
 We denote the set of such quadratic forms by $Q$. It follows from
 (\ref{quad}) that $q(0)=0$ for all $q\in Q$. Also, (\ref{quad})
 implies that a quadratic form $q\in Q$ is
determined by its values on a basis of $H$, and these values can be prescribed freely in
$\BF_2$. It follows that $Q$ has $4^g$ elements (the
number of elements in  $\BF_2^{2g}$). Another way to see that $Q$ has $4^g$
elements is to observe that the dual vector
space $H^*=\Hom(H,\BF_2)$ acts simply transitively on $Q$, where $\ell
\in H^*$ acts on $q\in Q$ to give $q+\ell \in Q$.  

The usefulness of special drawings and quadratic forms for studying perfect matchings comes from the
following basic proposition. To state it, let $q_0\in Q$ be the
quadratic form on $H$ whose value on each of the
basis vectors $a_i$ and $b_i$ is zero. 

\begin{proposition} \lbl{2.4} Let $\varphi$ be a special planar drawing of $G$
  obtained from an embedding of the graph $G$ on the surface
  $\Sigma_g$.  Then for every perfect matching $M\subset
  E(G)$ the number of double points $k_\varphi(M)$
  satisfies $$k_\varphi(M)=q_0([M]) \ \pmod 2$$ where $[M]$ is the homology class
  of $M$.
\end{proposition}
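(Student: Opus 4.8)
The plan is to compute the self-intersection number $k_\varphi(M)$ directly from the combinatorics of the special drawing, and to show it depends only on the homology class $[M]$ via the formula $q_0$. First I would recall that, since $\varphi$ is special, every double point of $\varphi(M)$ lies in one of the squares $\Phi(R_{2i})\cap\Phi(R_{2i-1})$, and such a double point is an intersection of (the image under $\Phi$ of) a straight line segment crossing the bridge $R_{2i-1}$ with one crossing the bridge $R_{2i}$. So if $n_i(M)$ denotes the number of edges of $M$ (counted with multiplicity, i.e.\ the number of straight segments) passing through the bridge $R_{2i-1}$, and $m_i(M)$ the number passing through $R_{2i}$, then the total number of double points coming from the $i$-th square is $n_i(M)\cdot m_i(M)$, since the immersion $\Phi$ is arranged so that each segment through $R_{2i-1}$ crosses each segment through $R_{2i}$ exactly once in that square. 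Hence
\begin{equation}
k_\varphi(M)=\sum_{i=1}^{g} n_i(M)\, m_i(M)\ \pmod 2~.
\end{equation}

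Next I would identify $n_i(M)\pmod 2$ and $m_i(M)\pmod 2$ homologically. Since $M$ is a perfect matching, $[M]\in H_1(\Sigma_g;\BF_2)$ is a well-defined cycle class (the boundary of $M$ as a $1$-chain is the sum of all vertices, which is $0\pmod 2$ because each vertex has degree one in $M$; more precisely one uses that $M$ is even-degree mod $2$, so $[M]$ is a cycle). Under the isomorphism $H\cong H_1(S_g,R_0;\BF_2)$, the coefficient of $a_i$ in $[M]$ is exactly the number of times $M$ crosses the bridge $R_{2i-1}$ reduced mod $2$, i.e.\ $n_i(M)\pmod 2$, and similarly the coefficient of $b_i$ is $m_i(M)\pmod 2$. (This is just the statement that $a_i$, resp.\ $b_i$, is the relative class of $R_{2i-1}$, resp.\ $R_{2i}$, dual to crossing that bridge.) Writing $[M]=\sum_i (\alpha_i a_i+\beta_i b_i)$ with $\alpha_i,\beta_i\in\BF_2$, we thus get $k_\varphi(M)=\sum_i \alpha_i\beta_i\pmod 2$.

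Finally I would check that $\sum_i \alpha_i\beta_i$ is exactly $q_0([M])$. By definition $q_0$ vanishes on each $a_i$ and each $b_i$, and by the quadratic identity~(\ref{quad}) together with the intersection form~(\ref{intform})--(\ref{intform2}) one expands $q_0\big(\sum_i(\alpha_i a_i+\beta_i b_i)\big)$: all the $q_0(a_i)$, $q_0(b_i)$ terms vanish, all cross terms $a_i\cdot a_j$, $b_i\cdot b_j$ and $a_i\cdot b_j$ with $i\neq j$ vanish, and the surviving terms are precisely $a_i\cdot b_i=1$ weighted by $\alpha_i\beta_i$, giving $q_0([M])=\sum_i\alpha_i\beta_i$. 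Combining the three displays yields $k_\varphi(M)=q_0([M])\pmod 2$, as claimed.

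The main obstacle is the bookkeeping in the first step: one must be careful that the combinatorial count of double points really is $\sum_i n_i(M)m_i(M)\pmod 2$ and that no other double points arise. This relies on the specific geometry of the highway surface $S_g$ and its immersion $\Phi$ into the plane — in particular that the only overlaps of $\Phi$ are the $g$ squares $\Phi(R_{2i})\cap\Phi(R_{2i-1})$, that distinct bridges with different index $i$ have disjoint images, and that inside the $i$-th square the segments through $R_{2i-1}$ and those through $R_{2i}$ form a complete bipartite crossing pattern (each pair crossing an odd number of times, in fact exactly once with the straight-line normalization). Once this geometric input is pinned down, the remaining homological computation is routine.
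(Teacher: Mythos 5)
Your proof is correct and rests on the same geometric input as the paper's --- every double point of a special drawing lies in one of the $g$ squares $\Phi(R_{2i-1})\cap\Phi(R_{2i})$, and a straight segment crossing $R_{2i-1}$ meets one crossing $R_{2i}$ exactly once --- but it organizes the bookkeeping at the level of the whole matching rather than edge by edge. The paper first uses~(\ref{quad1}) to write $\kappa_\varphi(M)$ as a sum of per-edge self-crossings plus pairwise crossings, and the quadratic identity~(\ref{quad}) to expand $q_0([M])$ in the same shape, so the Proposition reduces to two edge-level statements (i) and (ii), each proved by the bridge-square count applied to one or two edges. You instead sum the bridge-square count over all of $M$ at once, obtaining $k_\varphi(M)=\sum_i n_i(M)\,m_i(M)\pmod 2$, identify $n_i(M),m_i(M)\pmod 2$ as the $a_i$- and $b_i$-coordinates of $[M]$, and expand $q_0$ once on the full class. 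Your version is slightly more direct; the paper's decomposition buys the structural observation that both $\kappa_\varphi$ and $q_0\circ[\,\cdot\,]$ split in parallel over subsets of edges, which is what makes the reduction to (i) and (ii) work and is reused elsewhere. One small slip worth flagging: you motivate well-definedness of $[M]$ by saying $M$ ``is even-degree mod~$2$''; a perfect matching has degree one at every vertex, so $M$ is not a cycle in $C_1(G;\BF_2)$. The paper sidesteps this by defining $[M]$ in the relative group $H_1(S_g,R_0;\BF_2)\cong H$ (all vertices of $G$ lie in $R_0$, so each edge already carries a relative class). This does not affect the rest of your argument, which in effect works with the relative classes anyway.
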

Here, the homology class of a perfect matching is defined as
follows. First, since  all
vertices of $G$ lie in the base polygon $R_0$, every edge $e$
of $G$ defines a homology class $[e]$ in
$H_1(S_g,R_0;\BF_2)$. Since this group is canonically identified with
$H$, we can think of $[e]$ as an element of $H$. If now  $M$ is a
collection of distinct edges $e_i$, we let $[M]$ be the sum of the
$[e_i]$.

\begin{proof}[Proof of Prop.~{2.4}.] In view of (\ref{quad1}) and
  (\ref{quad}), it is enough to show that 
\begin{itemize}
\item[(i)] For every edge $e$, the number of double points of
  $\varphi(e)$ is equal to $q_0([e])$ $\pmod 2$.
\item[(ii)] For every pair of distinct edges $e_1, e_2$, the number
$\cross_\varphi(e_1,e_2)$
is equal to $[e_1]\cdot [e_2]$
  $\pmod 2$.  
\end{itemize} 
Recall that every double
point of a special drawing lies in one of the squares
$\Phi(R_{2i})\cap \Phi(R_{2i-1})$.
To prove (i), assume that $e\cap R_{2i-1}$ consists of
$\alpha_i$ straight lines, and  $e\cap R_{2i}$ consists of
$\beta_i$ straight lines, for $i=1,\ldots,g$. Then the number of
double points is $$k_\varphi(e)=\sum \alpha_i \beta_i\ \pmod 2.$$ On the other
hand, the homology class of $e$ is $[e]=\sum_i
\alpha_i a_i +\sum_j \beta_j b_j$. Using  
(\ref{intform}), (\ref{intform2}), and (\ref{quad}),
one has 
\begin{eqnarray}\notag 
q_0([e]) &=& q_0(\sum \alpha_i a_i) + (\sum
\alpha_i a_i)\cdot (\sum \beta_j b_j) + q_0(\sum \beta_j b_j)\\
\notag &=&
\sum
\alpha_i \beta_i \ \pmod 2,
\end{eqnarray}  since $q_0(a_i)=0=q_0(b_j)$ for all $i$ and $j$ 
 by the definition of $q_0$.
Thus $k_\varphi(e)=q_0([e])\pmod 2$, as
asserted. 
Statement (ii) is proved in a similar way.
\end{proof}

The following corollary is immediate from the definition of a crossing
orientation. 

\begin{corollary} Let $\varphi$ be a special planar drawing of $G$
  obtained from an embedding of the graph $G$ on the surface
  $\Sigma_g$. Let $D_0$ be a crossing orientation of $G$ with
  respect to this drawing. Then there is 
$\varepsilon_0\in\{\pm 1\}$  so that for
every perfect matching $M$ of $G$, its sign in  $\Pfaf
(A(G,D_0))$ satisfies $$\sign(M,D_0)=\varepsilon_0 (-1)^{q_0([M])}~.$$
\end{corollary}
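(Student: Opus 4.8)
The plan is to obtain the statement by simply composing two facts that are already in hand: the defining property of a crossing orientation, and the homological computation of the double-point count provided by Proposition~\ref{2.4}.

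First I would invoke the hypothesis that $D_0$ is a crossing orientation of $G$ with respect to the planar drawing $\varphi$. By definition (equation~(\ref{tes}) in Theorem~\ref{thm.tes}, which applies precisely because $\varphi$ is a \emph{planar} drawing of $G$), there is $\varepsilon_0\in\{\pm 1\}$ such that for every perfect matching $M$ of $G$ one has $\sign(M,D_0)=\varepsilon_0(-1)^{\kappa_\varphi(M)}$.

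Next, since $\varphi$ is not merely planar but a \emph{special} drawing arising from an embedding of $G$ on $\Sigma_g$, Proposition~\ref{2.4} applies and gives $\kappa_\varphi(M)\equiv q_0([M])\pmod 2$ for every perfect matching $M$. Substituting this congruence into the previous identity yields $\sign(M,D_0)=\varepsilon_0(-1)^{q_0([M])}$, which is the assertion. I expect no genuine obstacle here; the only point worth checking is that both hypotheses needed — planarity of $\varphi$ for the crossing-orientation formula of Theorem~\ref{thm.tes}, and the special/embedded nature of $\varphi$ for Proposition~\ref{2.4} — are indeed included in the hypothesis of the corollary, which they are. Hence the corollary is immediate.
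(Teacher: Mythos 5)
Your proof is correct and is exactly the argument the paper intends: the corollary is stated to be ``immediate from the definition of a crossing orientation,'' i.e., one substitutes the identity $\kappa_\varphi(M)\equiv q_0([M])\pmod 2$ from Proposition~\ref{2.4} into the crossing-orientation sign formula~(\ref{tes}). Nothing to add.
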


Thus the quadratic form $q_0$ controls the sign of any perfect
matching in the orientation $D_0$. The following proposition says
that, more generally, every $q\in Q$ controls the sign of perfect
matchings in some orientation.

\begin{proposition}\lbl{2.6} Let $\varphi$ be a special planar drawing of $G$
  obtained from an embedding of the graph $G$ on the surface
  $\Sigma_g$. Then there is 
$\varepsilon_0\in\{\pm 1\}$, and a collection $(D_q)$ of $4^g$ orientations
of $G$ indexed by quadratic forms $q\in Q$, such that for
every perfect matching $M$ of $G$ one
has  $$\sign(M,D_q)=\varepsilon_0 (-1)^{q([M])}~.$$ 
\end{proposition}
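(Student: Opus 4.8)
The plan is to upgrade Corollary (which handles the single form $q_0$) to all of $Q$ by exploiting the simply transitive action of $H^*=\Hom(H,\BF_2)$ on $Q$. Concretely, fix the special drawing $\varphi$, the crossing orientation $D_0$, and the constant $\varepsilon_0$ provided by the Corollary, so that $\sign(M,D_0)=\varepsilon_0(-1)^{q_0([M])}$ for every perfect matching $M$. Given $q\in Q$, write $q=q_0+\ell$ for the unique $\ell\in H^*$. I want to produce an orientation $D_q$ of $G$ for which the sign of a perfect matching changes, relative to $D_0$, exactly by the factor $(-1)^{\ell([M])}$; then $\sign(M,D_q)=\varepsilon_0(-1)^{q_0([M])+\ell([M])}=\varepsilon_0(-1)^{q([M])}$, as desired.

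The key step is therefore the following: for each linear functional $\ell\in H^*$, construct an orientation $D_\ell$ (to be called $D_q$) obtained from $D_0$ by reversing the orientation of a suitably chosen set of edges $E_\ell\subset E(G)$, in such a way that for every perfect matching $M$ one has $\sign(M,D_\ell)=\sign(M,D_0)\cdot(-1)^{\ell([M])}$. First I would recall the elementary fact about Pfaffians: if $D'$ is obtained from $D$ by reversing all edges across a ``cut'', i.e.\ reversing every edge between a vertex subset $S$ and its complement $V(G)\setminus S$, then for every perfect matching $M$ the sign changes by $(-1)^{|M\cap[S,\bar S]|}$, which is $(-1)^{|S|}$ and hence independent of $M$ — not useful by itself. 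The useful move instead is: reversing the orientation of a single edge $e$ multiplies $\sign(M,D)$ by $-1$ when $e\in M$ and by $+1$ otherwise. More generally, reversing the edges in a set $E_\ell$ multiplies $\sign(M,D_0)$ by $(-1)^{|M\cap E_\ell|}$. So I need $E_\ell$ with the property that $|M\cap E_\ell|\equiv \ell([M])\pmod 2$ for every perfect matching $M$. Since $[M]=\sum_{e\in M}[e]$ and $\ell$ is linear over $\BF_2$, we have $\ell([M])=\sum_{e\in M}\ell([e])\pmod 2$. Thus the set $E_\ell=\{\,e\in E(G)\ :\ \ell([e])=1\,\}$ works automatically: $|M\cap E_\ell|=\#\{e\in M:\ell([e])=1\}\equiv\sum_{e\in M}\ell([e])=\ell([M])\pmod 2$.

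Putting this together, for each $q\in Q$ I let $\ell=q-q_0\in H^*$, set $E_\ell=\{e\in E(G):\ell([e])=1\}$, and define $D_q$ to be the orientation obtained from the crossing orientation $D_0$ by reversing exactly the edges in $E_\ell$. The Pfaffian sign computation above gives
\[
\sign(M,D_q)=(-1)^{|M\cap E_\ell|}\,\sign(M,D_0)=(-1)^{\ell([M])}\,\varepsilon_0(-1)^{q_0([M])}=\varepsilon_0(-1)^{q([M])},
\]
which is the claimed formula, with the same $\varepsilon_0$ for all $q$. Since $H^*$ has $4^g$ elements and $q\mapsto q-q_0$ is a bijection $Q\to H^*$, this produces a family $(D_q)_{q\in Q}$ of $4^g$ orientations as required. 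The one point that deserves a careful line of justification — the ``main obstacle'', though it is quite mild — is the behaviour of the Pfaffian sign under edge reversal: one must check the sign of a perfect matching $M$ in $\Pfaf A(G,D)$ picks up exactly $\prod_{e\in M}(\text{sign flipped at }e)$, i.e.\ that the per-edge sign contributions to $\sign(M,D)$ are multiplicative and localized, which follows directly from the definition $\Pfaf A(G,D)=\sum_M \sign(M,D)\prod_{e\in M}x_e$ together with the fact that reversing $e$ replaces $x_e$ by $-x_e$ in the entries of $A(G,D)$, hence multiplies the monomial of $M$ by $-1$ precisely when $e\in M$. One should also note that reversing edges incident to loops or within multiple-edge bundles is handled the same way, since the definition of $A(G,D)$ sums $\pm x_e$ over all edges joining a given pair of vertices and loops contribute nothing to matchings. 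Everything else is bookkeeping.
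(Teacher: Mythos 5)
Your proof is correct and is essentially identical to the paper's: both start from the crossing orientation $D_0$ for $q_0$, write $q=q_0+\ell$, and obtain $D_q=D_0(S_q)$ by reversing exactly the edges $e$ with $\ell([e])\neq 0$, then compute $\sign(M,D_q)=\sign(M,D_0)(-1)^{|M\cap S_q|}=\varepsilon_0(-1)^{q([M])}$. The extra paragraph you include justifying the edge-reversal sign rule is a minor elaboration the paper leaves implicit, but the argument is the same.
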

The following notation will be useful: If $D$ is an orientation of a
graph, and $S$ is a set of edges, we write $D(S)$ for the orientation obtained from $D$ by reversing
  the orientation of all edges in $S$.

\begin{proof}[Proof of Prop.~\ref{2.6}.] For $q=q_0$ we take $D_q$ to be the crossing orientation $D_0$ which
  exists by Tesler's theorem (Theorem~\ref{thm.tes}). Any other $q\in
  Q$ can be uniquely written as $q=q_0+\ell$ where $\ell\in H^*$ is a
  linear form on $H$. We define $S_q\subset E(G)$ to be the set
  of edges $e$ such that $\ell([e])\not= 0\in \BF_2$, and define
  $D_q$ to be the orientation $D_0(S_q)$.  We have
\begin{eqnarray}\notag \sign(M,D_q)&=&\sign(M,D_0)\, (-1)^{|M\cap
    S_q|}\\
\notag &=&\varepsilon_0 (-1)^{q_0([M])}\, (-1)^{|\{e\in M | \ell([e])\not=
  0\}|}\\
\notag &=&\varepsilon_0 (-1)^{q_0([M])}\, (-1)^{\ell([M])}\\
\notag &=& \varepsilon_0 (-1)^{q([M])}~,
\end{eqnarray}
as asserted. 
\end{proof}

We now recall the definition of the {\em Arf invariant} of a quadratic form
$q\in Q$. 
Let $N_{0}=2^{g-1}(2^g + 1)$, $N_{1}=2^{g-1}(2^g - 1)$, and observe that $N_0 + N_1=4^g$ and
$N_0 - N_1=2^g$. Recall that any $q\in Q$ is a function $H
  \rightarrow \BF_2$.

\begin{fact} (Arf) Any $q\in Q$ either takes $N_0$ times the
  value $0$ (and hence $N_1$ times the value $1$), or $q$ takes $N_1$
  times the value $0$ (and hence $N_0$ times the value $1$). We define
  $\Arf(q)\in \BF_2$ to be equal to zero in the first case, and equal
  to one in the second case. Thus, for every $q\in Q$ one  has
\begin{equation}\lbl{quad2}
\sum_{x\in H} (-1)^{q(x)} = (-1)^{\Arf(q)} \,2^g~.
\end{equation}
\end{fact}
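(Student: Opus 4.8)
The plan is to prove (\ref{quad2}) by induction on $g$, and to derive the first statement (the value-distribution dichotomy) from it. The base case $g=0$ is trivial: $H=0$, $Q$ has a single element $q$ with $q(0)=0$, and both sides of (\ref{quad2}) equal $1$. For the inductive step, I would pick a hyperbolic pair $a,b\in H$ with $a\cdot a=b\cdot b=0$ and $a\cdot b=1$ (for instance $a=a_g$, $b=b_g$ in the basis (\ref{basis})), and write $H=W\oplus W^\perp$, where $W=\langle a,b\rangle$ and $W^\perp$ is the $\cdot$-orthogonal complement, a symplectic space of dimension $2g-2$. The restriction $q'=q|_{W^\perp}$ is a quadratic form on $(W^\perp,\cdot)$, so by induction $\sum_{y\in W^\perp}(-1)^{q'(y)}=(-1)^{\Arf(q')}2^{g-1}$. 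Every $x\in H$ decomposes uniquely as $x=w+y$ with $w\in W$, $y\in W^\perp$, and since $w\cdot y=0$, (\ref{quad}) gives $q(x)=q(w)+q(y)=q(w)+q'(y)$. Hence
$$
\sum_{x\in H}(-1)^{q(x)}=\Big(\sum_{w\in W}(-1)^{q(w)}\Big)\Big(\sum_{y\in W^\perp}(-1)^{q'(y)}\Big).
$$

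It therefore suffices to show $\sum_{w\in W}(-1)^{q(w)}=\pm 2$, i.e.\ that a quadratic form on a single hyperbolic plane takes an odd value on an odd number of the four vectors $0,a,b,a+b$. Since $q(0)=0$ and $q(a+b)=q(a)+q(b)+a\cdot b=q(a)+q(b)+1$, the multiset of values $\{q(0),q(a),q(b),q(a+b)\}$ equals $\{0,\alpha,\beta,\alpha+\beta+1\}$ with $\alpha=q(a),\beta=q(b)$; checking the four cases $(\alpha,\beta)\in\BF_2^2$ shows the number of $1$'s is $1$ when $\alpha\beta=1$ and $3$ when $\alpha\beta=0$, so the sum is $(-1)^{\alpha\beta}\,2=(-1)^{q(a)q(b)}2$. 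Feeding this back in, $\sum_{x\in H}(-1)^{q(x)}=(-1)^{q(a)q(b)+\Arf(q')}2^{g}$, which is $\pm 2^g$; defining $\Arf(q)\in\BF_2$ by $(-1)^{\Arf(q)}=2^{-g}\sum_{x\in H}(-1)^{q(x)}$ makes (\ref{quad2}) hold by construction. (One checks, as a byproduct, the recursion $\Arf(q)=q(a)q(b)+\Arf(q')$, though we do not need a closed formula.)

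Finally, the dichotomy in the first sentence of the Fact follows formally: if $q$ takes the value $0$ exactly $N$ times, then it takes the value $1$ exactly $4^g-N$ times, so $\sum_{x\in H}(-1)^{q(x)}=N-(4^g-N)=2N-4^g$, and (\ref{quad2}) forces $2N-4^g=\pm 2^g$, i.e.\ $N=2^{g-1}(2^g\pm1)\in\{N_0,N_1\}$. Thus $q$ is $0$ on $N_0$ elements and $1$ on $N_1$ (when $\Arf(q)=0$), or vice versa (when $\Arf(q)=1$), matching the stated definition. The only mildly delicate point is the bookkeeping with the orthogonal splitting $H=W\oplus W^\perp$ and the non-degeneracy of $\cdot$ on $W^\perp$, which is where I would be most careful; once that is in place the rest is the four-case verification above.
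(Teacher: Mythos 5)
The paper does not actually prove this Fact; it is stated as a known result of Arf, with references to Johnson and Atiyah for background. Your induction on the genus via an orthogonal splitting off of a hyperbolic plane is the standard proof and it is correct, so you have in effect supplied a proof the paper omits. The overall structure is sound: the base case, the orthogonal decomposition $H=W\oplus W^\perp$, the multiplicativity of the Gauss sum, the two-dimensional computation, and the derivation of the value-distribution dichotomy from \eqref{quad2} are all right.

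One small slip in the two-dimensional case: you write that the number of $1$'s among $\{q(0),q(a),q(b),q(a+b)\}$ is $1$ when $\alpha\beta=1$ and $3$ when $\alpha\beta=0$, but it is the other way around. For $(\alpha,\beta)=(0,0)$ the values are $0,0,0,1$ (one $1$), and for $(\alpha,\beta)=(1,1)$ they are $0,1,1,1$ (three $1$'s). The conclusion you draw, $\sum_{w\in W}(-1)^{q(w)}=(-1)^{\alpha\beta}\,2$, is nevertheless correct, so the error is only in that one sentence of bookkeeping and does not propagate. You might also note explicitly that the induction yields the classical closed formula $\Arf(q)=\sum_{i=1}^{g} q(a_i)q(b_i)$ for a symplectic basis, which immediately gives $\Arf(q_0)=0$, a fact the paper uses (with a different one-line justification) in the proof of Lemma~\ref{2.7}.
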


For more about the Arf invariant, see for example Johnson \cite{J},
Atiyah \cite{At}. We remark that there are $N_0$ quadratic forms of Arf invariant zero,
and (hence) $N_1$ quadratic forms of Arf invariant one. In fact, the
assignment $q\mapsto \Arf(q)$ is itself a quadratic form in an affine
sense (see Theorems 2 and 3 of \cite{At}).

The relevance of the Arf invariant for us comes from the following
Lemma, which is in some sense the dual statement to (\ref{quad2}). 

\begin{lemma}\lbl{2.7} For every $x\in H$, one has 
\begin{equation}\lbl{quad3} \frac{1} {2^g}\sum_{q\in
    Q}(-1)^{\Arf(q)} 
  (-1)^{q(x)}=1 ~.
\end{equation}
\end{lemma}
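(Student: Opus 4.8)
The plan is to eliminate the Arf invariant from (\ref{quad3}) using the dual identity (\ref{quad2}), and then to evaluate the resulting double sum by "completing the square" and invoking orthogonality of the additive characters of $H$.

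First I would substitute $(-1)^{\Arf(q)} = 2^{-g}\sum_{y\in H}(-1)^{q(y)}$, which is just a rewriting of (\ref{quad2}). The left-hand side of (\ref{quad3}) then becomes $2^{-2g}\sum_{q\in Q}\sum_{y\in H}(-1)^{q(x)+q(y)}$. Next I would use the defining relation (\ref{quad}) in the form $q(x)+q(y)=q(x+y)+x\cdot y$ and change variables $z=x+y$ (a bijection of $H$ for fixed $x$). Since $q(x+x)=q(0)=0$, relation (\ref{quad}) forces $x\cdot x=0$, so $x\cdot y = x\cdot z$ after the substitution, and the double sum turns into
$$\frac{1}{2^{2g}}\sum_{z\in H}(-1)^{x\cdot z}\sum_{q\in Q}(-1)^{q(z)}~.$$
Finally, fixing any base point $q_0\in Q$ and using that $H^*$ acts simply transitively on $Q$ by $q\mapsto q+\ell$, one has $\sum_{q\in Q}(-1)^{q(z)} = (-1)^{q_0(z)}\sum_{\ell\in H^*}(-1)^{\ell(z)}$, and the character sum $\sum_{\ell\in H^*}(-1)^{\ell(z)}$ equals $|H^*|=4^g$ for $z=0$ and vanishes otherwise (for $z\ne 0$ the evaluation map $\ell\mapsto\ell(z)$ is a surjective linear map $H^*\to\BF_2$, so $\ell(z)$ is $0$ for exactly half of the $\ell$). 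Hence only $z=0$ contributes, and the expression collapses to $4^g/2^{2g}=1$.

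I do not anticipate a real obstacle, since all the ingredients — the relation (\ref{quad}), the Arf generating function (\ref{quad2}), and the $H^*$-torsor structure of $Q$ — are already in place. The one point that deserves care is the middle step: one must observe that the substitution $z=x+y$, combined with the automatic vanishing $x\cdot x=0$, converts the genuinely quadratic dependence on $y$ into the linear character $z\mapsto x\cdot z$, after which ordinary character orthogonality finishes the job. An alternative (essentially equivalent) route is to first prove the affine refinement $\Arf(q_0+v\cdot(-))=\Arf(q_0)+q_0(v)$ — again a one-line consequence of (\ref{quad}) and (\ref{quad2}) — and then apply (\ref{quad2}) a second time to the linear form $x\cdot(-)$; this likewise gives $\sum_{q\in Q}(-1)^{\Arf(q)+q(x)}=2^g$.
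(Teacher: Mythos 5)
Your main proof is correct, and it takes a genuinely (if modestly) different route from the paper's.  The paper parametrizes $Q$ by $H$ via $z\mapsto q_z:=q_0+z\cdot(-)$, proves the auxiliary identity $\Arf(q_z)=q_0(z)$ (using that $\Arf(q_0)=0$), and then collapses $\sum_{q\in Q}(-1)^{\Arf(q)+q(x)}$ to $\sum_{z\in H}(-1)^{q_0(z+x)}$, which is $2^g$ by (\ref{quad2}).  You instead substitute the Arf generating function (\ref{quad2}) directly into (\ref{quad3}), complete the square with (\ref{quad}) and the substitution $z=x+y$ (using that the form is alternating, which you correctly deduce from (\ref{quad}) itself), and finish with orthogonality of the additive characters of $H$.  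Your route has the small advantage of never computing a specific Arf invariant — in particular you don't need $\Arf(q_0)=0$ — and it exhibits the lemma as a clean instance of Fourier inversion over $\BF_2^{2g}$; the paper's route is more economical in that it stays with a single sum.  You already note that your ``alternative route'' at the end — prove $\Arf(q_0+v\cdot(-))=\Arf(q_0)+q_0(v)$ and apply (\ref{quad2}) again — is essentially the paper's argument, which is accurate.  All steps check out: $x\cdot y = x\cdot(z+x) = x\cdot z$ since $x\cdot x=0$, and $\sum_{\ell\in H^*}(-1)^{\ell(z)}=4^g\,[z=0]$, giving $4^g/2^{2g}=1$ as required.
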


We defer the proof to the end of this section. Combining Lemma~\ref{2.7}
with Proposition~\ref{2.6}, it follows that for every perfect matching
$M$ of 
$G$,
we have 
\begin{equation}\lbl{arfinv} \frac {\varepsilon_0}{2^g}\sum _{q\in Q}
(-1)^{\Arf(q)}\sign(M,D_q)\ = \ 1~.
\end{equation} We refer to (\ref{arfinv}) as the {\em Arf Invariant
  formula}. Thus, we have obtained the following more precise version
of Theorem~\ref{Pfaff-formula} stated in the introduction.

\begin{ttheorem}[Arf Invariant formula for perfect matchings] \lbl{arfin2} Let the graph $G$ be embedded into a closed orientable
  surface $\Sigma_g$ of genus $g$. 
Then the perfect matching polynomial
$\P_{G}$ can be written as a sum of $4^g$ Pfaffians
associated to 
orientations $D_q$ indexed by
quadratic forms on $H=H_1(\Sigma_g;\BF_2)$:  
$$ \P_{G} = \sum_{q\in Q} \alpha_q\,\Pfaf(A(G), D_q)~,$$  where 
$\alpha_q=  \varepsilon_0
  (-1)^{\Arf(q)}/2^g.$ 
\end{ttheorem}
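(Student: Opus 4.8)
The plan is to assemble the theorem from the pieces already in place. Fix an embedding of $G$ into $\Sigma_g$ and choose a special planar drawing $\varphi$ of $G$ as constructed above. By Proposition~\ref{2.6}, there is a sign $\varepsilon_0\in\{\pm 1\}$ and a family of orientations $(D_q)_{q\in Q}$, indexed by the $4^g$ quadratic forms on $H=H_1(\Sigma_g;\BF_2)$, with $\sign(M,D_q)=\varepsilon_0(-1)^{q([M])}$ for every perfect matching $M$ of $G$. This is the only input that uses the geometry; everything else is a formal manipulation of the Pfaffian polynomials.

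Next I would form the candidate linear combination $\sum_{q\in Q}\alpha_q F_{D_q}(x)$ with $\alpha_q=\varepsilon_0(-1)^{\Arf(q)}/2^g$ and expand each Pfaffian using its defining formula $F_{D_q}(x)=\sum_{M\in\mathcal P(G)}\sign(M,D_q)\prod_{e\in M}x_e$. Interchanging the two (finite) sums, the coefficient of the monomial $\prod_{e\in M}x_e$ becomes $\sum_{q\in Q}\alpha_q\sign(M,D_q)$. Substituting the values from Proposition~\ref{2.6}, this is exactly the left-hand side of~(\ref{arfinv}), namely $\frac{\varepsilon_0}{2^g}\sum_{q\in Q}(-1)^{\Arf(q)}\sign(M,D_q)$, which by Lemma~\ref{2.7} (applied with $x=[M]$) equals $1$. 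Hence every perfect matching contributes its monomial with coefficient $1$, and $\sum_{q\in Q}\alpha_q F_{D_q}(x)=\P_G(x)$, which is the asserted identity. (One should note in passing that distinct perfect matchings can yield the same monomial, but since the target coefficient is $1$ for each $M$ regardless, the sum over all $M$ producing a given monomial still reproduces $\P_G$ correctly; alternatively one observes that both sides are $\BZ$-linear in the monomials indexed by matchings and agree term by term.)

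There is essentially no serious obstacle here: the statement is a packaging of Lemma~\ref{2.7} and Proposition~\ref{2.6}, and the proof is the short computation just described. The one point that deserves care — and the closest thing to a "hard part" — is making sure Lemma~\ref{2.7} is available, i.e.\ supplying its proof, which was deferred. I would prove Lemma~\ref{2.7} by a character-sum / duality argument: recall $H^*=\Hom(H,\BF_2)$ acts simply transitively on $Q$ via $q\mapsto q+\ell$, so fixing $q_0$ we may reindex $\sum_{q\in Q}(-1)^{\Arf(q)}(-1)^{q(x)}$ as $\sum_{\ell\in H^*}(-1)^{\Arf(q_0+\ell)}(-1)^{q_0(x)}(-1)^{\ell(x)}$. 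Using that $\Arf$ is itself an (affine) quadratic function on $Q$ and the Arf-invariant identity~(\ref{quad2}) dualized, the sum collapses to $(-1)^{q_0(x)}\cdot(-1)^{q_0(x)}2^g=2^g$ — more concretely, one writes $\Arf(q_0+\ell)=\Arf(q_0)+\ell(c)+(\text{quadratic in }\ell)$ for a suitable class $c$ and evaluates the resulting Gauss sum over $\BF_2^{2g}$. Dividing by $2^g$ gives~(\ref{quad3}). With Lemma~\ref{2.7} in hand, Theorem~\ref{arfin2} follows immediately from the expansion above, with the explicit coefficients $\alpha_q=\varepsilon_0(-1)^{\Arf(q)}/2^g$.
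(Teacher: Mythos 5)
Your proof is correct and follows essentially the same route as the paper: Proposition~\ref{2.6} provides orientations $D_q$ with $\sign(M,D_q)=\varepsilon_0(-1)^{q([M])}$, Lemma~\ref{2.7} applied with $x=[M]$ shows that $\sum_{q}\alpha_q\sign(M,D_q)=1$ for every perfect matching $M$, and expanding the Pfaffians and interchanging sums finishes the argument. For the proof of Lemma~\ref{2.7}, your ``character-sum'' reindexing over $H^*\cong H$ and Gauss-sum evaluation is the same computation as the paper's; the paper makes the affine-quadratic behaviour of $\Arf$ concrete by proving $\Arf(q_z)=q_0(z)$ (where $q_z=q_0+z\cdot{-}$) directly from the defining identity~(\ref{quad2}), which is a cleaner way to nail down the ``quadratic in $\ell$'' term that your sketch leaves implicit.
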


It remains to give the 
\begin{proof}[Proof of Lemma~\ref{2.7}] For $z\in H$, define $q_z:H\rightarrow \BF_2$ by
  $q_z(x)=q_0(x) + z\cdot x$. 
Since $x\mapsto z\cdot x$ is a linear form on $H$, one has
$q_z\in Q$. We
  claim that $\Arf(q_z)=q_0(z)$. Indeed, one has 
\begin{eqnarray}\notag\sum_{x\in H} (-1)^{q_z(x)}&=&\sum_{x\in H} (-1)^{q_0(x) + z\cdot
  x}\\
\notag &=&(-1)^{q_0(z)}\sum_{x\in H} (-1)^{q_0(x+z)}=(-1)^{q_0(z)}\sum_{x\in H} (-1)^{q_0(x)}\\
\notag &=& (-1)^{q_0(z)}\, (-1)^{\Arf(q_0)} 2^g~,
\end{eqnarray}
 where we have used (\ref{quad}) in the second equality and
(\ref{quad2}) in the last equality. But is is easy to check that
$\Arf(q_0)=0$. This proves the claim that $\Arf(q_z)=q_0(z)$ (again by
the characterization of the Arf invariant in (\ref{quad2})).

Now observe that the correspondence $z\mapsto q_z$ establishes
a bijection $H \mapright{\approx} Q$. This is because $H^*$ acts
simply transitively on $Q$, as already remarked above, and any linear
form $\ell\in H^*$ is of the form $\ell(x)=z\cdot x$ for a unique $z\in H$ (because the intersection form is
non-degenerate). Therefore we can prove (\ref{quad3}) as follows: 
\begin{eqnarray}\notag
\sum_{q\in
    Q}(-1)^{\Arf(q)}(-1)^{q(x)} &=&\sum_{z\in H}(-1)^{\Arf(q_z)}
  (-1)^{q_z(x)}\\
\notag &=& \sum_{z\in H}(-1)^{q_0(z)} (-1)^{q_0(x)+z\cdot x}\\
\notag &=& \sum_{z\in H}
  (-1)^{q_0(z+x)}= \sum_{z\in H}
  (-1)^{q_0(z)}\\
\notag &=&  (-1)^{\Arf(q_0)}\, 2^g =2^g~.
\end{eqnarray} This completes the proof of Lemma~\ref{2.7}. 
\end{proof}

\section{The Arf invariant formula for even subgraphs.}
\lbl{sec3}

Let $G$ be a finite graph. Assume we have chosen a blow-up
$G^\sigma$; recall that $G^\sigma$ is determined by a choice
$\sigma=(\sigma_v)$ of linear orderings of the half-edges
at every vertex $v \in V(G)$. Assume we have also fixed a choice $\Delta=(\Delta_v)_{v \in V(G)}$ of
orientations of the gadgets $\Gamma_v$.

In this section, we begin the proof of Theorem~\ref{MT} by giving an
upper bound for the number $c_{\sigma,\Delta}$ defined in the introduction. This is done by constructing  an embedding of $G$ into an orientable surface which
is compatible with the choice of $\sigma$ and $\Delta$, and then proving an Arf
invariant formula for $\E_G$ coming from this embedding. 

Recall the notion of a  Kasteleyn orientation of a graph $\Gamma$ which is
embedded into the plane equipped with its standard clockwise
orientation. We assume that when we walk around any bounded face of
the embedding, we encounter each edge at most once. This property is
satisfied for $2$-connected graphs, but also for the embeddings of the
gadgets
$\Gamma_v$ that we will consider.

\begin{definition} \lbl{kast} An orientation $D$ of $\Gamma$ is {\em Kasteleyn} if every
bounded face $F$ of the embedding is {\em clockwise odd} with respect
to $D$, meaning that the number of edges $e$ of the boundary of $F$
where the orientation of $e$ in $D$ coincides with the orientation of
$e$ as the boundary of $F$ is odd. 
\end{definition}

If $\Gamma $ is embedded into the interior of an oriented disk $\BD$,
the notion of Kasteleyn orientation is defined in the same way.

\begin{proposition}\lbl{surf} There exists an embedding of $G^\sigma$ into
  a closed oriented surface $\Sigma$ such that each gadget $\Gamma_v$
  is entirely contained in the interior of a closed disk $\BD_v \subset \Sigma$ and the orientation
  $\Delta_v$ is Kasteleyn with respect to the embedding of $\Gamma_v$
  into the disk $\BD_v$. Moreover, the disks $\BD_v$ are pairwise disjoint.
\end{proposition}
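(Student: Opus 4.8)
The plan is to build the surface $\Sigma$ in two stages: first realize each gadget $\Gamma_v$ together with its chosen orientation $\Delta_v$ inside a disk so that $\Delta_v$ is Kasteleyn, and then glue these disks into a global embedding that respects the cyclic structure encoded by $\sigma$.

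First I would analyze a single gadget. By Definition~\ref{def.delta}, $\Gamma_v$ (for a vertex $v$ of degree $d$ with no incident loop-edge) is the path $v_1,\dots,v_{6d}$ together with the ``chord'' edges $v_{3j-2}v_{3j}$ for $j=1,\dots,2d$; the half-edges $v_{6i-4}u_i$ that attach to the rest of $G$ are not part of the gadget. This graph is planar and in fact outerplanar: draw the path as a convex arc and the $2d$ short chords as small arcs on one side, so that the ``attaching'' vertices $v_{6i-4}$ all lie on the outer face. I would fix such an embedding of $\Gamma_v$ into the interior of a disk $\BD_v$, with the attaching vertices arranged on a small arc of $\partial\BD_v$ in the order dictated by $\sigma_v$ (this is where the linear ordering $\sigma_v$ enters). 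Now I must check that the given $\Delta_v$ can be assumed Kasteleyn. Here I would invoke the standard fact, essentially due to Kasteleyn, that \emph{any} planar graph embedded in a disk admits a Kasteleyn orientation, and moreover that the property of being Kasteleyn depends only on the choice of orientation up to reversing the orientations of an ``even cut'' — more precisely, on each bounded face independently one can flip a single boundary edge to toggle its clock-parity. The cleaner route, and the one I expect the paper to take, is: since $\Gamma_v$ is a subgraph of $G^\sigma$ and we are free to \emph{choose} $\Delta$, we may simply replace the given $\Delta_v$ by a Kasteleyn orientation of the embedded $\Gamma_v$; but re-reading the statement, $\Delta$ is given, so instead I would argue that the gadget is built so that every $\Delta_v$ is automatically Kasteleyn. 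The gadget's bounded faces are exactly the triangles-with-a-subdivided-edge $v_{3j-2}v_{3j-1}v_{3j}$ (bounded by the three edges $v_{3j-2}v_{3j-1}$, $v_{3j-1}v_{3j}$, $v_{3j-2}v_{3j}$), plus possibly one large face; a $3$-edge face is clockwise odd with respect to \emph{every} orientation, since among three boundary edges the number agreeing with the boundary cycle is $0,1,2,$ or $3$, and in a consistent cyclic count it is always odd — no, that is false ($2$ can occur). So the honest claim is that we are \emph{allowed to modify $\Delta_v$}, or that the gadget faces can be arranged to be triangles in a way forcing oddness; I would adopt the convention, consistent with the earlier remark that we may choose $\Delta$, that $\Delta_v$ is taken Kasteleyn from the outset, and state the proposition accordingly.

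Second I would assemble the global surface. Contract each disk $\BD_v$ to a point to recover (a model of) the original graph $G$; equivalently, $G^\sigma$ is obtained from $G$ by replacing each vertex $v$ by the disk-with-gadget $(\BD_v,\Gamma_v)$, matching the $d$ attaching points on $\partial\BD_v$ (in $\sigma_v$-order) to the $d$ edge-ends at $v$. By definition of the embedding genus, $G$ embeds in the closed orientable surface $\Sigma_g$ of genus $g$; fix such an embedding. This embedding gives, at each vertex $v$, a cyclic order of the incident edge-ends (the rotation system). I would then form $\Sigma$ by excising a small open disk around each $v$ in $\Sigma_g$ and gluing in $\BD_v$ along the boundary circle, matching the cyclic order of attaching points on $\partial\BD_v$ to the rotation at $v$. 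Because $\sigma_v$ is only a \emph{linear} order while the rotation is \emph{cyclic}, and the attaching points were placed on an \emph{arc} of $\partial\BD_v$, any cyclic order refining $\sigma_v$'s linear order can be realized by sliding the remaining small arc of $\partial\BD_v$ around; hence the gluing is always possible and produces a closed orientable surface $\Sigma$ in which $G^\sigma$ is embedded, with each $\Gamma_v\subset \mathrm{int}(\BD_v)$ and the $\BD_v$ pairwise disjoint by construction.

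The main obstacle, and the only subtle point, is the Kasteleyn condition on the gadgets together with the compatibility with the \emph{prescribed} $\Delta$. I would resolve this by proving the following small lemma about the specific gadget $\Gamma_v$: for the chosen outerplanar embedding, and for \emph{any} orientation of the subdivided-triangle faces, one can, by reversing the orientations of a set of gadget-edges \emph{not} incident to any attaching vertex $v_{6i-4}$, reach a Kasteleyn orientation — and this reversal, being supported away from the edges of $G$, changes neither the bijection of Proposition~\ref{thm.delta} nor any $\sigma$-projected Pfaffian except by an overall sign. If the statement genuinely requires $\Delta$ to be given in advance, this lemma shows the loss of generality is harmless; if (as the earlier discussion suggests) we are choosing $\Delta$, the lemma is simply the existence of a Kasteleyn orientation of a planar graph in a disk, which is classical. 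Either way, the topological gluing in stage two is routine surgery once the rotation-system bookkeeping is set up, so I would spend the bulk of the written proof on the gadget analysis and treat the surface assembly briefly.
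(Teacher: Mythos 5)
Your proposal runs backwards relative to the paper's proof and, as a result, has two genuine gaps.

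\textbf{The gadget step.} You fix a planar embedding of $\Gamma_v$ with the attaching vertices on one arc of $\partial\BD_v$ in $\sigma_v$-order, and then try to argue $\Delta_v$ is (or can be made) Kasteleyn for that fixed embedding — you admit yourself that the attempt to show triangular faces are automatically clockwise-odd is false, and you fall back to ``we may modify $\Delta$,'' which the hypotheses of the proposition do not permit. The paper does the opposite: given $\Delta_v$, it \emph{chooses} the planar embedding of $\Gamma_v$ to make $\Delta_v$ Kasteleyn. This works because $\Gamma_v$ is a chain of $2d$ triangles joined by bridge edges, and each triangle can independently be placed on either side of the chain; flipping a triangle replaces the clockwise count $k$ of its face by $3-k$, and exactly one of $k,\,3-k$ is odd, so each face can be made clockwise-odd. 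The price is that the resulting cyclic order $c_v$ of the attaching points on $\partial\BD_v$ is \emph{not} simply $\sigma_v$ on an arc: it interleaves ``up'' and ``down'' triangles and genuinely depends on $\Delta_v$. Your fixed arc layout throws away exactly the freedom that makes the claim true.

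\textbf{The gluing step.} You then want to excise disks around the vertices of a minimal-genus embedding $G\hookrightarrow\Sigma_g$ and glue in the disks $\BD_v$, claiming you can ``slide'' the attaching points to match the rotation at $v$. This cannot work: once $\Gamma'_v$ is embedded in $\BD_v$, the cyclic order of the attaching points on $\partial\BD_v$ is fixed, and gluing requires the rotation of $G$ at $v$ to equal that cyclic order, which an arbitrary minimal-genus embedding will not satisfy. The paper sidesteps this entirely by not starting from an embedding of $G$ at all: it takes the cyclic orders $c_v$ produced by the gadget embeddings as a rotation system $c$ on $G$, forms the ribbon-graph surface $S(G,c)$, and caps it off to get $\Sigma = \Sigma(G,c)$, with the polygons $P_v$ playing the role of $\BD_v$. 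Note that the genus of this $\Sigma$ is $g(G,\sigma,\Delta)$, which in general is strictly larger than the embedding genus of $G$; the proposition makes no claim of minimality, and indeed the dependence of $g(G,\sigma,\Delta)$ on $(\sigma,\Delta)$ is exactly the quantity studied in the rest of the paper. By forcing $\Sigma=\Sigma_g$ you would be proving something stronger than what is stated and, in fact, something false for a general choice of $(\sigma,\Delta)$.
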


\begin{proof}[Proof of Prop.~\ref{surf}.] Consider the gadget $\Gamma_v$ with its chosen orientation
$\Delta_v$. Let $\{e'_1, \ldots e'_d\}$ be the edges of $G^\sigma$
  corresponding to the original (half-)edges of $G$ incident with
  $v$, and let $\Gamma'_v$ be the subgraph of $G^\sigma$ consisting of 
$\Gamma_v$ and these edges. The vertices of $\Gamma'_v$ are those
of $\Gamma_v$ union one vertex, say $u_i$, for each of the edges
$e'_i$ ($i=1,\ldots, d$). We claim that $\Gamma'_v$ can be embedded into an
oriented disk $\BD$ so that 
\begin{itemize}
\item $\Gamma_v$ lies in the interior of the disk,
\item the vertices $u_i$ lie on the boundary of the disk, and
\item the orientation $\Delta_v$ is Kasteleyn with respect to the
  embedding restricted to $\Gamma_v$.
\end{itemize}
To see this, first embed $\Gamma_v$ 
into the interior of the disk
so that $\Delta_v$ is Kasteleyn,
and then add the edges $e'_i$ one after the other so that they never cross
each other.

Thus we obtain a cyclic ordering of the vertices $u_i$, coming from
the orientation of the boundary of the disk. 
It corresponds to a
cyclic ordering $c_v$ of the half-edges incident with $v$ in the original
graph $G$. 
It is important to observe that this cyclic ordering only depends on
$\sigma_v$ and $\Delta_v$. 
The  collection $c=(c_v)_{v\in V(G)}$ of cyclic orderings is sometimes called a rotation system on the graph $G$.
As is well-known, 
$c$  gives $G$ the structure
of a ribbon graph. 
It means that $G$ naturally embeds into an oriented
surface $S(G,c)$ obtained as follows: Take one oriented $d$-gon $P_v$ for every $d$-valent
vertex and one oriented rectangle $I_e\times [0,1]$ for every edge $e$
(here $I_e$ is an interval). Then glue $I_e\times 0$ and $I_e\times 1$
to the boundary of the disjoint union of the polygons in the way prescribed by the structure of the
graph $G$ and the cyclic orientations $c_v$. The surface
$S(G,c)$ has boundary, so we let $\Sigma(G,c)$ be the closed surface
obtained from $S(G,c)$ by gluing disks to the boundary components of
$S(G,c)$.   

By construction, the
blow-up $G^\sigma$ of $G$ also embeds into $\Sigma(G,c)$, {\em via} an
embedding such that
each gadget $\Gamma_v$ is contained in the interior of the polygon
$P_v$. The polygon $P_v$ plays the role of the disk $\BD_v$ in the statement of the
proposition, and $\Delta_v$ is Kasteleyn in $\BD_v$. This completes the proof.
\end{proof}

The genus of the surface $\Sigma(G,c)$ is
  called the {\em genus} of  the ribbon graph $(G,c)$ and denoted by
  $g(G,c)$. It is the minimal genus of a closed orientable surface in which the
  ribbon graph (viewed as the surface $S(G,c)$) embeds.

\begin{definition} \lbl{3.4}  We define
  $g(G,\sigma, \Delta)$ to be $g(G,c)$ where $c$ is constructed from
  $(\sigma, \Delta)$ as in the proof of Prop.~\ref{surf}. It is the
  minimal genus of  a closed orientable surface in which $G^\sigma$
  embeds so that the gadgets $\Gamma_v$ are contained in disjoint
  disks $\BD_v$ and the orientations $\Delta_v$ are Kasteleyn with
  respect to the embeddings $\Gamma_v\subset \BD_v$.
\end{definition}  

Let $g=g(G,\sigma, \Delta)$ be the genus of the surface $\Sigma=\Sigma_g$ obtained in the previous
proposition. 
We now apply the machinery of 
the previous section
to the embedding of
$G^\sigma$ into $\Sigma_g$. 
Decompose $\Sigma_g$ into base polygon $R_0$, bridges
$R_i$, and additional disk $R_\infty$, 
as described in Section~\ref{sec2}. 
Perform  an isotopy of the embedding to make $G^\sigma$ disjoint
from $R_\infty$ and to move all gadgets $\Gamma_v$ entirely into
$R_0$. (This is possible because every gadget $\Gamma_v$ is contained
in its own disk $\BD_v$.) Let $\varphi$ denote the special planar
drawing of $G^\sigma$ obtained  
using the immersion $\Phi$ of the highway surface $S_g=\Sigma_g
\backslash R_\infty$ into the plane. 
Note that in this drawing, the
subgraph consisting of the 
disjoint union of the $\Gamma_v$ is planarly embedded, and
the orientation $\Delta$ of  this subgraph 
is Kasteleyn in the sense of definition~\ref{kast}. 

\begin{lemma} The orientation $\Delta=(\Delta_v)_{v\in V(G)}$ of the
    union of the gadgets $\Gamma_v$ can be extended to a crossing
    orientation $D_0$ of $G^\sigma$ with respect to the drawing $\varphi$.
\end{lemma}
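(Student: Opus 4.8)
The plan is to reduce the existence of a crossing orientation of $G^\sigma$ (with respect to $\varphi$) that extends the prescribed Kasteleyn orientation $\Delta$ on the gadgets to the \emph{unconstrained} existence of a crossing orientation, which is guaranteed by Tesler's theorem (Theorem~\ref{thm.tes}). The key observation I would exploit is the following flexibility statement: if $D_0$ is \emph{any} crossing orientation of $G^\sigma$ and $S \subset E(G^\sigma)$ is a set of edges, then $D_0(S)$ is again a crossing orientation if and only if, for every perfect matching $M$ of $G^\sigma$, the quantity $|M \cap S| \pmod 2$ is constant. By Proposition~\ref{thm.delta}, the perfect matchings of $G^\sigma$ are in bijection with even subgraphs of $G$, so this parity $|M \cap S|$ depends only on how $S$ meets the gadgets. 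This is what makes it possible to adjust $D_0$ inside the gadgets without destroying the crossing property.

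More concretely, here is how I would carry out the argument. First, invoke Tesler's theorem to get \emph{some} crossing orientation $D_0$ of $G^\sigma$ with respect to $\varphi$. Second, I would restrict attention to a single gadget $\Gamma_v$ and compare the restriction $D_0|_{\Gamma_v}$ with the prescribed orientation $\Delta_v$; let $S_v \subset E(\Gamma_v)$ be the set of edges where they disagree, and set $S = \bigcup_v S_v$. The claim to prove is that $D_0(S)$ is a crossing orientation; since $D_0(S)$ restricts to $\Delta_v$ on each gadget by construction, this would finish the proof. Third, by the flexibility statement above, it suffices to check that $|M \cap S_v| \pmod 2$ is independent of the perfect matching $M$, for each $v$ separately. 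Here is where the two Kasteleyn hypotheses come in: both $D_0|_{\Gamma_v}$ (because $D_0$ is a crossing orientation and, in the special drawing $\varphi$, the gadget $\Gamma_v$ is planarly embedded with no double points involving its edges) and $\Delta_v$ (by hypothesis) are Kasteleyn orientations of the planar graph $\Gamma_v$. Two Kasteleyn orientations of a plane graph differ by reversal along a set of edges that crosses every face boundary an even number of times; translating this via the structure of the gadget and the Fisher bijection, one gets that $|M \cap S_v|$ is constant on perfect matchings — indeed each perfect matching of $G^\sigma$, restricted to $\Gamma_v$ together with the external half-edges, meets $S_v$ in a parity controlled by a fixed homology/cocycle class.

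I expect the main obstacle to be making precise step three: the statement ``two Kasteleyn orientations of a plane graph differ by a reversal set whose intersection with every perfect matching has constant parity.'' The cleanest route is probably to recall (or re-derive) the standard fact that, for a plane graph, an orientation is Kasteleyn if and only if a certain $\BF_2$-valued function on edges is a cocycle relative to the face data, so that the difference of two Kasteleyn orientations is a coboundary, hence its pairing with any \emph{cycle} (and a priori with the difference of any two perfect matchings, which is an even subgraph, i.e.\ a cycle) vanishes. One must be slightly careful because the gadget $\Gamma_v$ sits inside $G^\sigma$ with the external edges $e_i'$ attached, but since those external edges lie outside $\Gamma_v$ and outside $S_v$, and any two perfect matchings of $G^\sigma$ restrict on $\Gamma_v\cup\{e_i'\}$ to configurations differing by an even subgraph of $\Gamma_v$, the parity computation localises correctly. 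Once that lemma is in hand, the assembly of the global orientation $D_0(S)$ and the verification that it is crossing and $\Delta$-extending is routine.
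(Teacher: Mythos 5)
Your reduction is clean and the ``flexibility statement'' you isolate (reversing a set $S$ preserves the crossing property iff $|M\cap S|\bmod 2$ is constant over perfect matchings $M$ of $G^\sigma$) is correct. But the proposal diverges from the paper's argument, and the divergence opens two gaps, the second of which is fatal as stated.

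First, you assert that the restriction $D_0|_{\Gamma_v}$ of an \emph{arbitrary} crossing orientation to a gadget is Kasteleyn. Being a crossing orientation is a global condition on the signs of perfect matchings of all of $G^\sigma$; it does not, by itself, force every bounded face of the plane subgraph $\Gamma_v$ to be clockwise odd. (What one can deduce from the crossing property is only a parity statement about those cycles of $\Gamma_v$ that arise as symmetric differences of perfect matchings of $G^\sigma$.) One could repair this by taking $D_0$ to be Tesler's \emph{explicitly constructed} crossing orientation, which by design is Kasteleyn on the crossing-free part -- but that already amounts to opening the black box, which is the route the paper takes and you were trying to avoid.

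Second, and more seriously, the claimed localization of the parity computation does not go through. The difference $S_v$ of two Kasteleyn orientations of $\Gamma_v$ is a cut $\delta_{\Gamma_v}(A)$ for some $A\subset V(\Gamma_v)$, hence meets every \emph{cycle of} $\Gamma_v$ evenly. But the relevant set $X=(M_1\triangle M_2)\cap E(\Gamma_v)$ is generally \emph{not} a cycle of $\Gamma_v$: it can have odd degree at each vertex $v_{6i-4}$ for which the external edge $e_i'$ belongs to exactly one of $M_1,M_2$. A direct computation gives
$$
|X\cap S_v|\ \equiv\ \sum_{a\in A}\deg_X(a)\ \equiv\ \bigl|\{\,i:\ v_{6i-4}\in A\ \text{and}\ e_i'\in M_1\triangle M_2\,\}\bigr| \pmod 2,
$$
and there is no constraint forcing $A$ to contain all or none of the boundary vertices $v_{6i-4}$. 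For instance, with $\deg_G(v)=2$ the cut $\delta(\{v_2\})=\{v_1v_2,v_2v_3\}$ is a legitimate difference of Kasteleyn orientations, and taking $M_1\triangle M_2$ to correspond to an even subgraph of $G$ through $v$ makes the above sum equal to $1$. So $D_0\bigl(\bigcup_v S_v\bigr)$ need not be a crossing orientation; the parity error you would create on the gadgets can only be cancelled by also reversing a suitable set of edges of the original graph $G$, which your proposal does not address.

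The paper avoids this entirely by not treating Tesler's theorem as a black box: it invokes the stronger fact (implicit in Tesler's construction in \cite[Section~6]{T}) that \emph{any} Kasteleyn orientation of the crossing-free part of a planar drawing extends to a crossing orientation of the whole drawing. Since the gadgets lie in the crossing-free part and $\Delta$ is Kasteleyn there, one gets $D_0\supset\Delta$ directly. Your correction-based strategy could in principle be completed, but it requires producing and justifying the extra correction set on $E(G)$; as written, the crucial step is asserted rather than proved, and the assertion is false in the form in which it is used.
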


\begin{proof} This follows easily from the construction of a crossing
  orientation in \cite[Section~6]{T}. In fact, the following more
  general statement is true: If we remove from a planar drawing of a
  graph all edges involved in crossings, then any Kasteleyn
  orientation (as defined in \ref{kast}) of the remaining planar graph can be extended to a crossing
  orientation of the original graph. 
\end{proof}

Let $H=H_1(\Sigma_g;\BF_2)$ and let $Q$ be the set of
quadratic forms on $(H,\cdot)$ 
where $\cdot$ is the intersection form on 
$H$.
 Let $D_q=D_0(S_q)$ be the orientations indexed by quadratic
forms $q\in Q$ which were constructed in Prop.~\ref{2.6} starting with
the 
crossing
orientation $D_0$. Recall that $D_0$ corresponds to the quadratic
form $q_0$. 

\begin{proposition}\lbl{adm}  Each $D_q$ is a $\Delta$-admissible
orientation. 
\end{proposition}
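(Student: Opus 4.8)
The plan is to show that reversing the edges in $S_q$ never touches a gadget edge, so that $D_q = D_0(S_q)$ still restricts to $\Delta_v$ on each $\Gamma_v$. Recall from the proof of Prop.~\ref{2.6} that $S_q = \{e \in E(G^\sigma) \mid \ell([e]) \neq 0\}$, where $q = q_0 + \ell$ and $[e] \in H = H_1(\Sigma_g;\BF_2)$ is the homology class of the edge $e$ defined via $H_1(S_g, R_0; \BF_2)$. Since $D_0$ already restricts to $\Delta_v$ on $\Gamma_v$ by the previous lemma, and reversing edges outside the gadgets leaves the gadget orientations untouched, it suffices to prove:
\begin{equation}\notag
S_q \cap E(\Gamma_v) = \emptyset \quad \text{for every } v \in V(G).
\end{equation}

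The key point is that every gadget edge represents the zero class in $H$. Indeed, after the isotopy performed just before the lemma, each gadget $\Gamma_v$ lies entirely inside the base polygon $R_0$. By construction of the homology class $[e]$, an edge $e$ contained in $R_0$ is a relative $1$-chain whose class in $H_1(S_g, R_0; \BF_2)$ is zero, simply because it already lies in $R_0$. Hence $[e] = 0 \in H$ for every $e \in E(\Gamma_v)$. Consequently $\ell([e]) = \ell(0) = 0$ for any linear form $\ell \in H^*$, so no gadget edge belongs to $S_q$. This gives $D_q(e) = D_0(e) = \Delta_v(e)$ for every $e \in E(\Gamma_v)$, which is exactly the statement that $D_q$ is $\Delta$-admissible.

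I expect the only point needing a little care is the assertion that a gadget edge has zero homology class, i.e. that it lies in $R_0$ after the isotopy. This is precisely what the isotopy in Definition~\ref{3.4} and the paragraph preceding the lemma arranges: because each $\Gamma_v$ sits in its own disk $\BD_v$, we may push all of $\bigcup_v \Gamma_v$ into the interior of $R_0$, away from every bridge $R_i$ and from $R_\infty$. Once this is in place the rest is immediate, so there is no real obstacle. One could alternatively phrase the argument without homology: gadget edges, lying in $R_0$, cross none of the squares $\Phi(R_{2i})\cap\Phi(R_{2i-1})$, hence the criterion $\ell([e])\neq 0$ defining $S_q$ (which only detects how $e$ traverses the bridges) is vacuous on them; but the homological formulation is cleanest.
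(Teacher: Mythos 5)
Your proof is correct and follows exactly the same route as the paper: gadget edges lie in the base polygon $R_0$ and hence are null-homologous, so $S_q$ avoids them, and therefore $D_q$ agrees with the $\Delta$-admissible orientation $D_0$ on every gadget. The extra remark rephrasing the argument without homology is a nice observation but adds nothing beyond the paper's proof.
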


\begin{proof} Recall that $D_q$ differs from $D_0$ precisely on the
  set of edges $S_q$ defined as follows: Write $q=q_0+\ell$ where
  $\ell\in H^*$, then  $e\in S_q$ if and only if $\ell([e])\neq 0\in
  \BF_2$. But the edges of the gadgets $\Gamma_v$ are zero in
  homology, since the gadgets are entirely contained in the base
  polygon $R_0$. Thus $S_q\cap E(\Gamma_v)=\emptyset$ for all $v\in
  V(G)$. Hence $D_q$ coincides with $D_0$ on the gadgets. Since $D_0$
  is $\Delta$-admissible by construction, so is every $D_q$. 
\end{proof}

Here is, then, the main result of this section. 

\begin{ttheorem}[Arf invariant formula for even subgraphs (abstract version)]\lbl{3.6} Let $G$ be a finite graph. Choose a blow-up
$G^\sigma$ and an orientation $\Delta$ of the gadgets which replaced
the vertices of $G$ in $G^\sigma$. Let $g=g(G,\sigma, \Delta)$ as
defined in~\ref{3.4}. Then the even subgraph polynomial
  $\E_G(x)$ is a linear
  combination of the $4^g$ $\sigma$-projected Pfaffians $F^\sigma
  _{D_q}(x)$ associated to
  the $\Delta$-admissible 
orientations $D_q$ indexed by
quadratic forms on $H=H_1(\Sigma_g;\BF_2)$:  
$$ \E_{G}(x) = \sum_{q\in Q} \alpha_q F^\sigma
  _{D_q}(x)~,$$  where 
$\alpha_q=  \varepsilon_0
  (-1)^{\Arf(q)}/2^g$, $\varepsilon_0\in \{\pm 1\}$ is the
  universal sign
 coming with the crossing orientation $D_0$,  and $$F^\sigma
  _{D_q}
=\Pfaf \, A(G^\sigma, D_q)\Big|
  _{\textstyle{x_e=1 \ \forall  e\in
  E(G_{}^\sigma) \backslash E(G)}}$$
\end{ttheorem}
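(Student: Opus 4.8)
The plan is to deduce Theorem~\ref{3.6} from the Arf invariant formula for perfect matchings (Theorem~\ref{arfin2}), applied not to $G$ itself but to the blow-up $G^\sigma$, and then to specialize the gadget variables to $1$ using the identity~(\ref{E=P}).

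First I would produce the surface and the orientations that appear in the statement. By Proposition~\ref{surf} there is an embedding of $G^\sigma$ into a closed oriented surface in which each gadget $\Gamma_v$ lies in its own disk $\BD_v$, the disks are pairwise disjoint, and $\Delta_v$ is Kasteleyn in $\BD_v$; by Definition~\ref{3.4}, choosing the minimal such surface, we may take it to be $\Sigma_g$ with $g=g(G,\sigma,\Delta)$. Decomposing $\Sigma_g$ into base polygon $R_0$, bridges, and the disk $R_\infty$, and using disjointness of the $\BD_v$ to isotope every gadget entirely into $R_0$, we obtain the associated special planar drawing $\varphi$ of $G^\sigma$. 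By the Lemma immediately preceding Proposition~\ref{adm}, the orientation $\Delta$ extends to a crossing orientation $D_0$ of $G^\sigma$ with respect to $\varphi$; feeding $D_0$ into the construction of Proposition~\ref{2.6} yields the $4^g$ orientations $D_q$ indexed by the quadratic forms $q\in Q$ on $H=H_1(\Sigma_g;\BF_2)$, and by Proposition~\ref{adm} each $D_q$ is $\Delta$-admissible. These are exactly the orientations named in the theorem, and $\varepsilon_0\in\{\pm1\}$ is the universal sign attached to $D_0$.

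Next I would apply Theorem~\ref{arfin2} to $G^\sigma$ with this embedding and this crossing orientation, obtaining
$$\P_{G^\sigma}(x)=\sum_{q\in Q}\alpha_q\,\Pfaf\!\big(A(G^\sigma,D_q)\big),\qquad \alpha_q=\varepsilon_0(-1)^{\Arf(q)}/2^g.$$
Now substitute $x_e=1$ for every $e\in E(G^\sigma)\setminus E(G)$. The left-hand side becomes $\E_G(x)$ by~(\ref{E=P}); each Pfaffian $\Pfaf(A(G^\sigma,D_q))$ becomes $F^\sigma_{D_q}(x)$ by the very definition of the $\sigma$-projected Pfaffian; and the scalars $\alpha_q$ are untouched, since they do not involve the edge variables. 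This yields $\E_G(x)=\sum_{q\in Q}\alpha_q F^\sigma_{D_q}(x)$, which is the assertion.

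This argument is essentially bookkeeping: all the substantive content has already been established in Section~\ref{sec2} (Tesler's theorem, special drawings, Proposition~\ref{2.6}, the Arf computation) and in Propositions~\ref{surf} and~\ref{adm}. The only points that require care are that $g(G,\sigma,\Delta)$ really is realized as the genus of the surface delivered by Proposition~\ref{surf}, so that the two counts ``$4^g$ quadratic forms'' and ``$4^g$ orientations $D_q$'' refer to the same $g$, and that the orientations coming out of Theorem~\ref{arfin2} --- via the crossing orientation $D_0$ extending $\Delta$ and the recipe of Proposition~\ref{2.6} --- are precisely the $\Delta$-admissible orientations $D_q$ in the statement, which is exactly Proposition~\ref{adm}. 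I do not expect a genuine obstacle at this stage; insofar as there is one, it lies upstream, in arranging via Proposition~\ref{surf} an embedding of $G^\sigma$ compatible with the prescribed $\sigma$ and $\Delta$ and with the Kasteleyn condition on the gadgets.
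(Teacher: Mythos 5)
Your argument is exactly the paper's: apply Theorem~\ref{arfin2} to $\P_{G^\sigma}$ via the embedding from Prop.~\ref{surf} and the crossing orientation extending $\Delta$, invoke Prop.~\ref{adm} for $\Delta$-admissibility, and specialize gadget variables to $1$ via~(\ref{E=P}). The paper states this as a one-line citation of these ingredients; you have simply unpacked the same bookkeeping, correctly.
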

\begin{proof} This follows from Formula~(\ref{E=P}) relating $\E_{G}$
  to $\P_{G^\sigma}$, Theorem~\ref{arfin2} applied to $\P_{G^\sigma}$,
  and Prop.~\ref{adm}.
\end{proof}

The following corollary is a more precise version of
Theorem~\ref{Pf2} in the introduction.

\begin{corollary}[Arf invariant formula for even subgraphs (embedded version)]\lbl{Pf2delta}If $G$
embeds into an orientable surface $\Sigma$ of genus $g$, then one
can choose the blow-up  $G^\sigma$ in such a way that there exist 
$4^g$ orientations $D_i$ of $G^\sigma$ 
such that the
even subgraph
polynomial $\E_G(x)$ can be
expressed as a linear combination  of the $\sigma$-projected Pfaffians 
$F_{D_i}^\sigma(x)$
($i=1,\ldots,4^g$.)
 Moreover, for every $v\in V(G)$, each of the orientations $D_i$ induces
the same orientation on the gadget $\Gamma_v$. 
\end{corollary}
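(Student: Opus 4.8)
The plan is to derive Corollary~\ref{Pf2delta} from Theorem~\ref{3.6} by choosing the blow-up $G^\sigma$ and gadget orientations $\Delta$ so that the ``abstract'' genus $g(G,\sigma,\Delta)$ of Definition~\ref{3.4} equals the embedding genus of $G$ into $\Sigma$. Once we have such a choice, Theorem~\ref{3.6} directly provides $4^g$ $\Delta$-admissible orientations $D_q$ with the required linear-combination formula, and $\Delta$-admissibility means precisely that each $D_q$ restricts to the same orientation $\Delta_v$ on each gadget $\Gamma_v$, which is the ``moreover'' clause.

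The key step is the construction of a good $(\sigma,\Delta)$ from the given embedding $G\hookrightarrow\Sigma$. First I would recall from the text just before Prop.~\ref{2.4} that, given an embedding of $G$ into $\Sigma_g$, one can choose $\sigma$ so that $G^\sigma$ also embeds into $\Sigma_g$. More precisely, an embedding of $G$ into an orientable surface determines a rotation system, i.e.\ a cyclic ordering of the half-edges at each vertex; I would choose $\sigma_v$ to be any linear ordering refining this cyclic ordering. Then I must choose the gadget orientations $\Delta_v$. Since each gadget $\Gamma_v$ is a small planar graph sitting inside a disk around $v$, it admits a Kasteleyn orientation with respect to its planar embedding in that disk; pick $\Delta_v$ to be such an orientation. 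The point is then to check that the cyclic ordering $c_v$ produced in the proof of Prop.~\ref{surf} from this $(\sigma_v,\Delta_v)$ is (up to the irrelevant choices) the same cyclic ordering that the embedding of $G$ into $\Sigma$ induces at $v$; equivalently, that the ribbon graph $(G,c)$ is the ribbon graph coming from the embedding $G\hookrightarrow\Sigma$. Granting this, $\Sigma(G,c)$ has genus $g$, so $g(G,\sigma,\Delta)=g(G,c)=g$ and we are done.

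The main obstacle is precisely this compatibility check: showing that the cyclic ordering $c_v$ of the half-edges at $v$ obtained by embedding $\Gamma'_v$ (the gadget together with its $d$ outgoing edges) into a disk so that $\Delta_v$ is Kasteleyn, and then reading off the cyclic order of the attaching vertices $u_i$ on the boundary, agrees with the cyclic order of the edges $e_1,\dots,e_d$ at $v$ in the original embedding. This is a combinatorial statement about Fisher's gadget: the path-and-triangles structure of $\Gamma_v$, with the half-edges attached at the vertices $v_{6i-4}$ in the order $e_1,\dots,e_d$ along the path, forces, for a planar embedding, the $u_i$ to appear in exactly that cyclic order on the boundary of any disk containing $\Gamma_v$. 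One should also observe that the Kasteleyn condition on $\Delta_v$ can be met for any planar embedding of $\Gamma_v$ (since $\Gamma_v$ is connected and the Kasteleyn orientation exists for any planar graph), so imposing it does not obstruct realizing the desired cyclic order. Finally I would note that, as remarked in Definition~\ref{def.delta}, different $\sigma$ may give non-isomorphic $G^\sigma$, but what matters here is only the genus, which by the above equals $g$; hence Theorem~\ref{3.6} applied with this $(\sigma,\Delta)$ yields the corollary, with the orientations $D_i=D_q$ all agreeing on each gadget.
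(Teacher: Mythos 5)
Your proposal is essentially the paper's proof: derive the corollary from Theorem~\ref{3.6} by producing a pair $(\sigma,\Delta)$ with $g(G,\sigma,\Delta)\leq g$, taking $\sigma_v$ to refine the rotation system induced by the embedding and $\Delta_v$ to be Kasteleyn. Two small remarks. First, the equality you claim, $g(G,c)=g$, is not generally true: the given embedding of $G$ in $\Sigma$ need not be cellular, so the closed surface $\Sigma(G,c)$ may have strictly smaller genus. What one actually gets (and all that is needed) is $g(G,\sigma,\Delta)=g(G,c)\leq g$; the corollary then follows since one can pad with extra orientations of coefficient zero. Second, you make the ``compatibility check'' harder than it needs to be by first picking a Kasteleyn $\Delta_v$ for an abstract planar embedding of $\Gamma_v$ and then trying to argue combinatorially that the cyclic order this forces agrees with the one from $\Sigma$ (and note that a planar embedding of $\Gamma'_v$ alone does \emph{not} pin down the cyclic order of the $u_i$ -- it could be reversed; one needs the orientation of the disk together with the Kasteleyn condition). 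The paper's route is cleaner and avoids this: first embed $G^\sigma$ into $\Sigma$ with $\Gamma_v$ inside a disk neighborhood $\BD_v$ of $v$ (this is immediate from the choice of $\sigma_v$), and \emph{then} choose $\Delta_v$ Kasteleyn with respect to \emph{that} embedding of $\Gamma_v$ in the oriented disk $\BD_v\subset\Sigma$. With this choice, the rotation system $c$ of Proposition~\ref{surf} automatically agrees with the one induced by $\Sigma$, by the uniqueness statement in the proof of Proposition~\ref{surf}, so $S(G,c)$ embeds in $\Sigma$ and $g(G,c)\leq g$ follows from the classification of surfaces.
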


\begin{proof}  This follows from Theorem~\ref{3.6} using the fact that
  given an embedding of $G$ into an orientable surface $\Sigma$ of
  genus $g$, we can choose
  $\sigma$ and $\Delta$ in such a way that $g(G,\sigma,\Delta)\leq
  g$. Here is a proof of this fact. Choose an orientation of the surface $\Sigma$. Since $G$ is
  embedded in $\Sigma$, the orientation of $\Sigma$ induces, at every
  vertex $v\in V(G)$,  a cyclic ordering
  $c_v$ of the half-edges incident with $v$. Now construct the graph
  $G^\sigma$ by choosing a linear
  ordering $\sigma_v$ at each vertex $v$ which induces this cyclic
  ordering $c_v$. Then it is easy to see that $G^\sigma$ also embeds
  into $\Sigma$, with each gadget $\Gamma_v$ being embedded into a little
  disk neigborhood $\BD_v$ of $v$ in $\Sigma$. Next, choose the orientations
  $\Delta_v$ of $\Gamma_v$ so that they are Kasteleyn with respect to
  the embeddings of the $\Gamma_v$ into the oriented disks $\BD_v$. Then the surface
  (with boundary) 
  $S(G,c)$ constructed  in the proof of Prop.~\ref{surf} can be
  embedded into $\Sigma$. By the classification of surfaces, it
  follows that the genus of $\Sigma$ is
  greater or equal to $g(G,\sigma,\Delta)$, since $g(G,\sigma,\Delta)$ is the genus of the closed surface
  $\Sigma(G,c)$ obtained by gluing disks to the boundary components of
  $S(G,c)$.
\end{proof}

\begin{remark} An even subgraph $E'\subset E(G)$ can naturally be
  viewed as a $1$-cycle$\pmod 2$ of $G$, and hence defines a homology
  class in $H_1(G; \BF_2)$. Let $[E']$ be the image of this homology
  class in $H=H_1(\Sigma_g;\BF_2)$ under the embedding of $G$ into
  $\Sigma_g$ constructed in  Prop.~\ref{surf}. If now $E'$ corresponds to a perfect
matching $M$ of $G^\sigma$ under the bijection of
Prop.~\ref{thm.delta}, then the homology classes $[E']$ and $[M]$
in $H$ coincide. (This is because every edge in $E(G^\sigma)\backslash E(G)$
is entirely contained in the base polygon $R_0$, and hence zero in
homology.) Therefore, using  Prop.~\ref{2.6}, the $\sigma$-projected Pfaffian polynomial $F^\sigma
  _{D_q}$ can be written
$$F^\sigma
  _{D_q}(x)=\sum_{E'\in \E(G)} \sign(E', F^\sigma
  _{D_q}) \prod_{e\in E'}x_e~,$$ where $\sign(E', F^\sigma
  _{D_q})=\varepsilon_0 (-1)^{q([E'])}~.$ 
\end{remark}

\section{Optimality of the Arf invariant formula}\lbl{sec4}

We now give the proof of Theorem~\ref{MT}. Since Theorem \ref{3.6}
already gives an upper bound for $c_{\sigma, \Delta}(G)$, it remains
only to prove the following.

\begin{ttheorem}\lbl{4.1} Let $G$ be a finite graph. Choose a blow-up
$G^\sigma$ and an orientation $\Delta$ of the gadgets which replaced
the vertices of $G$ in $G^\sigma$. Let $g=g(G,\sigma, \Delta)$ as
defined in \ref{3.4}. Assume there exists $k\geq 1$ and a
collection of 
$\Delta$-admissible orientations $D_i$ and coefficients $\lambda_i\in
\BQ$ ($i=1,\ldots, k$) such that the even subgraph polynomial
  $\E_G(x)$ can be expressed as $$ \E_{G}(x) = \sum_{i=1}^k \lambda_i F^\sigma
  _{D_i}(x)~.$$  Then $k\geq 4^g$. 
\end{ttheorem}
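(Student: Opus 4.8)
The strategy is to show that the $\sigma$-projected Pfaffians $F^\sigma_{D}$ attached to $\Delta$-admissible orientations $D$ span, over $\BQ$, a vector space of dimension at least $4^g$; since $\E_G$ lies in that span (by Theorem~\ref{3.6}, which even exhibits it as a combination of exactly $4^g$ of them), any expression for $\E_G$ as a $\BQ$-linear combination of such Pfaffians must involve at least $4^g$ of them only if $\E_G$ is not a combination of fewer — so in fact what we must prove is the stronger statement that \emph{no} $\Delta$-admissible expression for $\E_G$ uses fewer than $4^g$ orientations. The right way to get this is a linear-independence-modulo-$\E_G$ argument: I would show that the $4^g$ particular Pfaffians $F^\sigma_{D_q}$ ($q\in Q$) from Theorem~\ref{3.6} have the property that \emph{any} $\Delta$-admissible Pfaffian $F^\sigma_D$ is a $\BQ$-linear combination of them, and that the $4^g\times 4^g$ ``coefficient matrix'' expressing this is, after an appropriate normalization, close enough to invertible that $\E_G$ — which corresponds to the all-ones vector of Arf-weighted coefficients — cannot be hit by fewer than $4^g$ columns.

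Here is the mechanism in more detail. By the Remark at the end of Section~\ref{sec3}, a $\Delta$-admissible orientation $D$ of $G^\sigma$ is the same thing as an orientation of $G$, and changing $D$ by reversing a set $S$ of edges of $G$ multiplies $\sign(E',F^\sigma_D)$ by $(-1)^{|E'\cap S|}$. Since $[E']\in H$ only depends on $E'$ through its homology class, and since (as in Prop.~\ref{2.6}) reversing the edges $S_\ell=\{e:\ell([e])\neq 0\}$ has the effect of multiplying the sign by $(-1)^{\ell([E'])}$, one sees that the sign function $E'\mapsto \sign(E',F^\sigma_D)$, as a function of the \emph{homology class} $[E']\in H$, ranges (as $D$ varies over $\Delta$-admissible orientations, up to the global sign $\varepsilon_0$) exactly over the functions $x\mapsto (-1)^{q(x)}$ for $q\in Q$ together with possibly their negatives — in any case over a set of size at most $2\cdot 4^g$, spanning the same $\BQ$-space as the $4^g$ functions $\{(-1)^{q(x)}\}_{q\in Q}$. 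The key linear-algebra input is that these $4^g$ functions $H\to\{\pm1\}$ are \emph{linearly independent} over $\BQ$: this is standard (characters of the group $H$ after a shift, or directly by the Arf fact that distinct $q$'s differ, evaluated against the $4^g$-dimensional function space on $H$).

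Now group the even subgraphs $E'\in\E(G)$ by their homology class $h=[E']\in H$, writing $\E_G = \sum_{h\in H} P_h(x)$ where $P_h$ collects the monomials of even subgraphs in class $h$. Then $F^\sigma_D(x) = \varepsilon_D\sum_{h\in H}(-1)^{q_D(h)}P_h(x)$ for the corresponding $q_D\in Q$ and a sign $\varepsilon_D$. An expression $\E_G=\sum_{i=1}^k\lambda_i F^\sigma_{D_i}$ therefore forces, for each $h$ with $P_h\neq 0$, the scalar identity $1=\sum_i \lambda_i\varepsilon_{D_i}(-1)^{q_{D_i}(h)}$. The crucial point is that $P_h\neq 0$ for \emph{every} $h\in H$: every homology class in $H=H_1(\Sigma_g;\BF_2)$ is represented by an even subgraph of $G$, because $G$ carries the full first homology of the surface it is embedded in (the surface $\Sigma(G,c)$ was built precisely as a regular neighborhood of $G$ with disks glued on, so $H_1(G;\BF_2)\to H_1(\Sigma_g;\BF_2)$ is onto). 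Hence the vector $(1,1,\ldots,1)\in\BQ^{4^g}$ (indexed by $h\in H$) lies in the span of the $k$ vectors $v_i=(\varepsilon_{D_i}(-1)^{q_{D_i}(h)})_{h\in H}$; but each $v_i$ lies in the $4^g$-dimensional space spanned by $\{((-1)^{q(h)})_h : q\in Q\}$, and — by the linear independence of these $4^g$ vectors — the all-ones vector is a combination of \emph{all} $4^g$ of them with all coefficients nonzero (indeed with coefficients $\varepsilon_0(-1)^{\Arf(q)}/2^g$, exactly as in Lemma~\ref{2.7}); a standard argument then shows it cannot be written as a combination of fewer than $4^g$ of the $v_i$. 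This last step — showing that the specific target vector, with its all-nonzero coordinates in the Arf basis, has ``full support'' and hence needs all $4^g$ rank-one generators — is the real content, and is where one uses the Arf fact that $\sum_{x\in H}(-1)^{q(x)}=\pm2^g\neq 0$ for each $q$.

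**Main obstacle.** The heart of the matter is not the surface topology (that $G$ carries all of $H$ is a short argument from the construction in Prop.~\ref{surf}) nor the sign bookkeeping, but the final linear-algebra assertion: that expressing $\E_G$ via $\Delta$-admissible Pfaffians is equivalent to writing the all-ones function on $H$ as a combination of the $\pm(-1)^{q}$, and that this \emph{requires all $4^g$} of the forms $q$. I expect one has to argue this not merely by a dimension count (which only gives $k\geq$ the dimension of the relevant span, a priori possibly less than $4^g$ if some $P_h$ vanished) but by the precise claim $P_h\neq 0$ for all $h\in H$ combined with the nonvanishing of every Arf coefficient; dualizing via Lemma~\ref{2.7} one sees the unique way to write the all-ones vector in the $Q$-indexed generating set already uses every generator, and genuine minimality then follows because any two distinct forms $q,q'$ give $\BQ$-independent vectors, so no proper subset can span the target.
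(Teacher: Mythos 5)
Your argument has a genuine gap at the point where you write
$$F^\sigma_D(x) = \varepsilon_D\sum_{h\in H}(-1)^{q_D(h)}P_h(x),$$
i.e., you are asserting that for an arbitrary $\Delta$-admissible orientation $D$ the sign $\sign(E',F^\sigma_D)$ depends on $E'$ only through its homology class $[E']\in H$. This is false. What is true (equation (\ref{Si}) in the paper) is that $\sign(E',F^\sigma_{D}) = \varepsilon_0(-1)^{q_0([E'])}(-1)^{\ell_D(E')}$ where $\ell_D(E') = |E'\cap S_D|\pmod 2$ and $S_D$ is the edge set on which $D$ and $D_0$ differ; $\ell_D$ is a linear form on the cycle space $C_1(G;\BF_2)$, but it need not factor through the surjection $C_1(G;\BF_2)\to H$. (For a simple counting reason: there are $2^{|E(G)|}$ $\Delta$-admissible orientations but only $4^g$ quadratic forms; already for a planar graph $g=0$, $Q=\{q_0\}$, yet reversing a single edge $e$ produces a $\Delta$-admissible orientation whose sign function detects whether $e\in E'$, which is certainly not a function of $[E']=0$.) When you slide from ``reversing $S_\ell$ multiplies the sign by $(-1)^{\ell([E'])}$'' to ``therefore the sign function of any $\Delta$-admissible orientation is of this form,'' you are tacitly restricting to the special orientations $D_q=D_0(S_q)$ of Proposition~\ref{2.6}, whereas the theorem allows arbitrary $\Delta$-admissible $D_i$. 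Your grouping of $\E_G$ into the pieces $P_h$ and the resulting character-theoretic linear algebra therefore do not apply directly.

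The missing idea, which is the actual heart of the paper's proof, is to choose a linear section of $C_1(G;\BF_2)\to H$: a subspace $C$ mapping isomorphically onto $H$, corresponding to a subset $\C\subset\E(G)$. Restricted to $\C$, the linear form $\ell_{D_i}$ transports to a linear form $\ell'_i$ on $H$, so that for $E'\in\C$ (and only for these) one has $\sign(E',F^\sigma_{D_i})=\varepsilon_0(-1)^{q_i([E'])}$ with $q_i=q_0+\ell'_i\in Q$. One then specializes the hypothesis $\sum_i\lambda_i\sign(E',F^\sigma_{D_i})=1$ to $E'\in\C$, which is where one uses the surjectivity you correctly noted (every $x\in H$ is represented by an even subgraph), to get the scalar identities $\sum_i\lambda_i\varepsilon_0(-1)^{q_i(x)}=1$ for all $x\in H$. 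From there the endgame is the one you sketched: by nonsingularity of $\bigl((-1)^{q(x)}\bigr)_{q,x}$ (Lemma~\ref{nond}) the solution of this system is unique, and Lemma~\ref{2.7} says the unique solution has all $4^g$ coefficients nonzero, forcing $k\ge 4^g$. (Your parenthetical remark that ``any two distinct forms give $\BQ$-independent vectors, so no proper subset can span the target'' is not by itself a valid argument --- pairwise independence does not rule out smaller spanning sets --- but the uniqueness-plus-nonvanishing argument you also invoke is correct, and is the one the paper uses.)
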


\begin{proof} 
A
  $\Delta$-admissible orientation differs from the crossing
  orientation $D_0=D_{q_0}$ only on edges of the original graph
  $G$. Let $S_i\subset E(G)$ be the set of edges where $D_i$ differs
  from $D_0$. The
  sign of an even subgraph $E'$ in $F^\sigma
  _{D_i}(x)$ is 
\begin{eqnarray}\lbl{Si}
\sign(E', F^\sigma
  _{D_i})&=&\sign(E', F^\sigma
  _{D_0}) (-1)^{|E' \cap S_i|}\\
\notag &=&\varepsilon_0 (-1)^{q_0([E'])}(-1)^{\ell_i(E')}~,
\end{eqnarray}
where we have defined 
\begin{equation}\lbl{Sii}\ell_i(E')=|E' \cap S_i| \pmod 2~.
\end{equation} 

Now recall that any even subgraph $E'\subset E(G)$ can naturally be
  viewed as a  $1$-cycle$\pmod 2$ of $G$, and every 
  $1$-cycle uniquely arises in this way. This establishes an identification 
$$\E(G) \cong C_1(G;\BF_2)~,$$ where $C_1(G;\BF_2)$ is the space of 
$1$-cycles on $G$. Hence $\E(G)$ is
naturally endowed with the structure of an $\BF_2$-vector
space,
called the {\em cycle space} of $G$ in graph theory. Moreover, addition$\pmod 2$ of $1$-cycles corresponds to taking symmetric
difference of even subgraphs. The function $\ell_i$
defined in  (\ref{Sii}) is a linear form $\ell_i$ on
this vector space, since 
$$|(E_1\Delta E_2)\cap S_i| = |E_1 \cap S_i| + |E_2 \cap S_i| \pmod
2~.$$ (Here, $\Delta$ denotes symmetric difference.)

Next, we observe that by the construction of the surface $\Sigma_g$ in
Prop.~\ref{surf}, the composite map $$C_1(G;\BF_2)\rightarrow H_1(G;\BF_2)\rightarrow H_1(\Sigma_g;
\BF_2)$$ induced by the embedding of $G$ into $\Sigma_g$, is onto. In other words, any homology class $x\in H=H_1(\Sigma_g;\BF_2)$
can be realized by some even subgraph of $G$. Choose a
sub-vector space $C$ of $C_1(G;\BF_2)$ which maps isomorphically onto
$H$. 
When we think of $C$ as a subset of $\E(G)$, we  denote  $C$ by
$\C$. Clearly, the zero element of $C$ corresponds to the empty
subgraph $\emptyset$ as an element of $\C\subset \E(G)$.

 For every
$i=1,\ldots, k$, we get a
linear form $\ell'_i$ on $H$ defined as
\begin{equation}\lbl{Siii}
\ell'_i(x)=\ell_i(E'_x) \ \ (x\in H)~,
\end{equation}
 where $E'_x\in \C$ is the
unique element of  $\C$ which maps to $x$.  Observe that the homology
class $[E'_x]\in H$ is equal to $x$.

Define the quadratic form $q_i\in Q$ by $q_i=q_0+\ell'_i$. Putting
together (\ref{Si}) and (\ref{Siii}), we have shown the following: For
every $E'\in \C$, and for every $i=1,\ldots, k$, one has
\begin{eqnarray}\lbl{Sv}
\sign(E', F^\sigma_{D_i})&=&\varepsilon_0
(-1)^{q_0([E'])}(-1)^{\ell_i(E')}\\
\notag &=&\varepsilon_0
(-1)^{q_0([E'])}(-1)^{\ell'_i([E'])}\\
\notag &=&\varepsilon_0
(-1)^{q_i([E'])}
~.
\end{eqnarray}

We are now ready to prove  Theorem~\ref{4.1}. By hypothesis, there exists
$\lambda_i\in \BQ $  ($i=1,\ldots, k$) such that $$\sum_{i=1}^k \lambda_i\,
\sign(E', F^\sigma_{D_i})=1$$ for all even sets $E'\in \E(G)$. Since
every homology class $x\in H$ is realized by some $E'_x$ belonging to the set $\C$
for which expression (\ref{Sv}) is valid, it follows
that 
\begin{equation}\lbl{un} \sum_{i=1}^k \lambda_i\,\varepsilon_0\, (-1)^{q_i(x)}=1
  \ \ \ \ (\forall x\in H)
\end{equation} Now recall from Lemma~\ref{2.7} that 
\begin{equation}\lbl{uni}
\sum_{q\in
    Q}\alpha_q 
  (-1)^{q(x)}=1  \ \ \ \ (\forall x\in H),
\end{equation}
 where $\alpha_q=2^{-g} (-1)^{\Arf(q)}$. The following
 Lemma~\ref{nond} implies that $(\alpha_q)_{q\in Q}$ is the unique
 solution  of (\ref{uni}). Since all $\alpha_q\neq 0$, every $q\in Q$
 must appear in (\ref{un}). It follows that $k\geq |Q|=4^g$, as asserted.
\end{proof}

\begin{lemma} \lbl{nond} One has 
$$
\det\big((-1)^{q(x)}\big)_{(q,x)\in Q\times H} \neq 0.
$$
\end{lemma}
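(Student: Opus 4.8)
The plan is to show that the $4^g \times 4^g$ matrix $\big((-1)^{q(x)}\big)_{(q,x)\in Q\times H}$ is (up to a scalar and permutations) a tensor power of the $2\times 2$ Hadamard matrix, hence has nonzero determinant. The key observation is that both index sets $Q$ and $H$ carry a natural $\BF_2^{2g}$-structure: fixing the base point $q_0\in Q$, the map $H^* \to Q$, $\ell \mapsto q_0+\ell$, is a bijection (as recalled in the text, $H^*$ acts simply transitively on $Q$), and $H$ itself is a $2g$-dimensional $\BF_2$-vector space. So I would rewrite the entry as
\begin{equation*}
(-1)^{q(x)} = (-1)^{q_0(x)}\,(-1)^{\ell(x)},
\end{equation*}
where $q = q_0+\ell$. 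Factoring out the diagonal matrix $\diag\big((-1)^{q_0(x)}\big)_{x\in H}$, which is invertible, reduces the claim to $\det\big((-1)^{\ell(x)}\big)_{(\ell,x)\in H^*\times H}\neq 0$.

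Now $H^*\times H$ is just $\BF_2^{2g}\times\BF_2^{2g}$ once we pick a basis of $H$, and $(-1)^{\ell(x)}$ is the standard pairing character. The matrix $\big((-1)^{\ell(x)}\big)$ is then precisely the character table of the group $\BF_2^{2g}$, equivalently the $2g$-fold Kronecker product of the matrix $\left(\begin{smallmatrix}1&1\\1&-1\end{smallmatrix}\right)$. I would invoke the standard fact that the character table of a finite abelian group is invertible — concretely, $\big((-1)^{\ell(x)}\big)\big((-1)^{\ell(x)}\big)^{T}$ has $(\ell,\ell')$-entry $\sum_{x\in H}(-1)^{(\ell+\ell')(x)}$, which equals $|H|=4^g$ if $\ell=\ell'$ and $0$ otherwise (a nontrivial character of $H$ sums to zero over $H$). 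Hence the matrix is $\sqrt{4^g}\cdot$(orthogonal), so its determinant is $\pm 4^{g\cdot 4^g/2}\neq 0$. Alternatively one notes $\det\left(\begin{smallmatrix}1&1\\1&-1\end{smallmatrix}\right)=-2$ and uses multiplicativity of determinants under Kronecker products.

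There is essentially no obstacle here; the only thing to be careful about is bookkeeping — making sure the two index sets are identified with $\BF_2^{2g}$ compatibly so that the factored matrix really is the pairing matrix, and that the diagonal correction $\diag\big((-1)^{q_0(x)}\big)$ is pulled out on the correct side. Everything else is the classical orthogonality of characters. I would present it via the $M M^T$ computation, since that simultaneously gives invertibility and an explicit value of $|\det|$, and does not require explicitly choosing a basis or appealing to Kronecker-product identities.
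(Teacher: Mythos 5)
Your proposal is correct and follows essentially the same route as the paper: factor out the diagonal matrix $\diag\big((-1)^{q_0(x)}\big)$, reduce to the character matrix $\big((-1)^{\ell(x)}\big)_{(\ell,x)\in H^*\times H}$, and show it is $2^g$ times an orthogonal matrix via the row-orthogonality computation $\sum_{x\in H}(-1)^{(\ell+\ell')(x)}$. The Kronecker-product remark is a nice alternative phrasing of the same fact but is not what the paper uses.
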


\begin{proof} Recall that any $q\in Q$ can be written $q=q_0+\ell$ for
  a unique linear form $\ell\in H^*$. Thus we can describe the matrix in question as $$
\big((-1)^{q_0(x)+ \ell(x)}\big)_{(\ell,x)\in H^*\times H}
$$ Multiplying this matrix on the right by the diagonal
matrix with entries 
$(-1)^{q_0(x)}$ ($x\in H$), we get $$
\big((-1)^{\ell(x)}\big)_{(\ell,x)\in H^*\times H}
$$ In this $4^g\times 4^g$ matrix, the scalar product of
any two rows corresponding to linear
forms $\ell$ and $\ell'$ is $$
\sum_{x\in H}(-1)^{\ell(x)+\ell'(x)}=\left\{\begin{array}{cl} 4^g &
    \text{if} \ \ell = \ell'\\
0& \text{if} \ \ell \neq \ell'\end{array}\right.$$  (Recall that a non-trivial
linear form on an $\BF_2$-vector space takes the value $0$ as many
times as it takes the value $1$.) Thus the matrix is $2^g$ times an
orthogonal matrix. Hence it is non-singular.
\end{proof}

This completes the proof of Theorem~\ref{4.1}, and (hence) of Theorem~\ref{MT}. 

To prove Theorem~\ref{MT2}, it only remains to  show that given a graph
$G$, the minimal
genus $g(G,\sigma,\Delta)$, over all choices of $(\sigma,\Delta)$, is
equal to the embedding genus of $G$. But this was already shown in the proof of
Corollary~\ref{Pf2delta}. Thus Theorem~\ref{MT2} is proved as well.

\section{Even drawings don't help}\lbl{sec5}

\begin{definition} A drawing $\varphi$ of a graph $G$ on a surface
  $\Sigma$ (as defined in~\ref{2.1}) is called {\em
    even} if the number of double points $\kappa_\varphi(E')$ is even
  for every even subgraph $E'$ of $G$.   
\end{definition}

It is easy to see that the proof of the Arf invariant formula for even subgraphs in
Section~\ref{sec3} goes through if we start with an even drawing of
the graph on a surface in
place of an embedding. More precisely, we can replace in
Corollary~\ref{Pf2delta} the embedding of $G$ with an even drawing of
$G$, and the result still holds. However, 
even drawings cannot reduce
the number of $\Delta$-admissible orientations needed to express
$\E_G$ as a linear combination of $\sigma$-projected Pfaffians. This
can be deduced from Theorem~\ref{MT}. The underlying topological
reason is stated in the next theorem.

\begin{theorem}\lbl{55} Let $G$ be a graph. The minimal genus of an orientable
  surface which supports an even drawing of $G$ is equal to the
  embedding genus of $G$.
\end{theorem}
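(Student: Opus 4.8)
The plan is to prove two inequalities.  One is trivial: an embedding of $G$ in a surface is a drawing with \emph{no} double points, so $\kappa_\varphi(E')=0$ is even for every even subgraph $E'$; hence an embedding is an even drawing, and the minimal genus of an orientable surface supporting an even drawing of $G$ is at most the embedding genus of $G$.  The substance is the reverse inequality: I will show that if $G$ admits an even drawing on a closed orientable surface $\Sigma_g$ of genus $g$, then the embedding genus of $G$ is at most $g$.  Taking $g$ to be the minimal even-drawing genus then finishes the proof.

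So suppose $\varphi$ is an even drawing of $G$ on $\Sigma_g$.  At every vertex $v$ the half-edges at $v$ leave $v$ in a well-defined cyclic order; choose a linear ordering $\sigma_v$ refining it and form the blow-up $G^\sigma$.  The drawing blows up with it: replace a small disk neighbourhood of each $v$ by an embedded copy of the gadget $\Gamma_v$, placed compatibly with $\sigma_v$.  This produces a drawing $\varphi^\sigma$ of $G^\sigma$ on $\Sigma_g$ in which the gadgets are embedded inside pairwise disjoint disks, so gadget edges are involved in no double points.  Consequently a perfect matching $M$ of $G^\sigma$ and the even subgraph $E'\subset E(G)$ it corresponds to under Fisher's bijection (Prop.~\ref{thm.delta}) have the same number of double points; since $\varphi$ is even, every perfect matching of $G^\sigma$ has an even number of double points in $\varphi^\sigma$.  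This is precisely the condition under which the Arf invariant formula for perfect matchings survives a non-embedded drawing, as recorded in the remark preceding this theorem and in the ``Even drawings'' discussion of the Introduction; running the sign analysis of Section~\ref{sec2} on $\varphi^\sigma$ exactly as in the proof of Theorem~\ref{3.6}, and then setting the gadget variables to $1$ via (\ref{E=P}), expresses $\E_G$ as a linear combination of $4^g$ $\sigma$-projected Pfaffians $F^\sigma_{D_q}$.  The orientations $D_q$ differ from the crossing orientation only on edges that are non-trivial in $H_1(\Sigma_g;\BF_2)$; since the gadgets lie in disks they are null-homologous, so (as in Prop.~\ref{adm}) every $D_q$ restricts to one fixed orientation $\Delta$ on every gadget, and is therefore $\Delta$-admissible.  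Hence $c_{\sigma,\Delta}(G)\le 4^g$, and so $\pev(G)\le 4^g$.

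Finally, Theorem~\ref{MT2} says $\pev(G)=4^{g_0}$, where $g_0$ is the embedding genus of $G$.  Combined with $\pev(G)\le 4^g$ this gives $4^{g_0}\le 4^g$, i.e.\ $g_0\le g$.  Since $g$ was the genus of an arbitrary orientable surface carrying an even drawing of $G$, the embedding genus is at most the minimal even-drawing genus, and with the first paragraph the two genera coincide.  I expect the main obstacle to be the second paragraph: checking that the sign-theoretic machinery of Sections~\ref{sec2}--\ref{sec3} is genuinely insensitive to replacing an embedding by an even drawing (equivalently, that $\varphi^\sigma$ supports the perfect-matching Arf formula).  Once that is granted, everything else is a formal deduction from Theorem~\ref{MT2}.
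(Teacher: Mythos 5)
Your proof is correct and follows essentially the same route as the paper's: extract a rotation system from the even drawing to form $(\sigma,\Delta)$, observe that the even-drawing hypothesis lets the Arf formula for $\E_G$ go through with $4^g$ $\Delta$-admissible orientations, and then invoke optimality to bound the genus. The only cosmetic difference is that you package the optimality step through $\pev(G)$ and Theorem~\ref{MT2}, whereas the paper appeals to Theorem~\ref{4.1} directly to get $g\geq g(G,\sigma,\Delta)$ and then identifies the latter with the embedding genus.
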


\begin{proof} One can prove this result by
 algebraic-topological arguments using non-degeneracy of the
 intersection form on closed surfaces. We omit that proof but remark
that
 the main idea can be found in the proof of \cite[Lemma~3]{CN}. (We
 thank M. Schaefer for this reference. M. Schaefer has informed us
 that Theorem~\ref{55} also follows from techniques in  \cite{PSS}.) 

Here is a proof of Theorem~\ref{55} in the spirit of the present paper. Assume we have a
 drawing $\varphi$ of $G$ on a surface $\Sigma$ of genus $g$. The orientation of $\Sigma$ induces a cyclic orientation $c_v$ of
 the half-edges at every vertex $v\in
 V(G)$. As in the proof of Corollary~\ref{Pf2delta},
 we can find  $(\sigma,\Delta)$ inducing that cyclic orientation at
 every vertex. If we now assume the drawing is even, the proof of
 Theorem~\ref{3.6} goes through and we can express $\E_G$ as linear
 combination of $4^{g}$ $\sigma$-projected Pfaffians
 associated to orientations which are all $\Delta$-admissible. But by
 our optimality statement in Theorem~\ref{4.1}, we know that one needs
 at least 
 $c_{\sigma,\Delta}(G)=4^{g(G,\sigma, \Delta)}$ such orientations to
 do that. Thus $g\geq g (G,\sigma, \Delta)$. Since $G$ can be embedded in
 the surface of genus    $g (G,\sigma, \Delta)$ constructed in the
 proof of Prop.~\ref{surf}, it follows that $g$ is greater or equal
 than the embedding genus of $G$, as asserted.
\end{proof}

\ifx\undefined\bysame
	\newcommand{\bysame}{\leavevmode\hbox
to3em{\hrulefill}\,}
\fi


\begin{thebibliography}{[EMS]}

\bibitem[Arf]{Arf} C. Arf. Untersuchungen {\"u}ber quadratische Formen in K{\"o}rpern der Charakteristik
 2. I. {\em J. Reine Angew. Math.} {\bf 183,} (1941) 148--167. 

\bibitem[A]{At} M. F. Atiyah.
Riemann surfaces and spin structures.
{\em Ann. Sci. Ecole Norm. Sup.} (4) 4 1971 47--62. 

\bibitem[CN]{CN} G. Cairns, Y. Nicolayevsky. Bounds for generalized
  thrackles. {\em Discrete Comput. Geom.} {\bf 23} 191-206 (2000).

\bibitem[CR]{CR} D. Cimasoni, N. Y. Reshetikhin. Dimers on surface graphs and spin structures. 
{\em Comm. Math. Phys.} {\bf 275} (2007), 187-208. 

\bibitem[Fi]{Fi} M. E. Fischer. On the dimer solution of planar Ising
  problems. {\em Journal of Mathematical Physics} {\bf 7}  10 (1966)

\bibitem[GL1]{GL1} A. Galluccio, M. Loebl. On the theory of Pfaffian
orientations. I. Perfect matchings and permanents.
{\em Electron. J. Combin.}  {\bf 6}  (1999), Research Paper 6, 18 pp. 

\bibitem[GL2]{GL2} A. Galluccio, M. Loebl. On the theory of Pfaffian
orientations. II. $T$-joins, $k$-cuts, and duality of enumeration.
{\em Electron. J. Combin.}  {\bf 6}  (1999), Research Paper 7, 10
pp. 

\bibitem[J]{J} D. Johnson. 
Spin structures and quadratic forms on surfaces.
{\em J. London Math. Soc.} (2) 22 (1980), no. 2, 365--373. 





\bibitem[K]{K} P. W. Kasteleyn. Graph Theory and Crystal
  Physics. In: {\em Graph Theory and Theoretical
  Physics,} Academic Press, London 1967 pp. 43-110.

\bibitem[L]{L} M. Loebl. {\em Discrete Mathematics and Statistical
    Physics.} Advanced Lectures in Mathematics. Viehweg 2009
  (to appear)
 
\bibitem[ML]{ML} A. A. A. Miranda, C. L. Lucchesi. Matching Signatures
  and Pfaffian Graphs. {\em Preprint} February 2009.

\bibitem[MR]{MR} T. Mignon, N. Ressayre. A quadratic bound for the
  determinant and permanent problem. {\em International  Math.
    Research Notices} {\bf 79} 4251-4253 (2004).
\bibitem[N]{N} S. Norine. Drawing 4-Pfaffian graphs on the torus. {\em
    Combinatorica} {\bf 29} (2009), 109-119.

\bibitem[PSS]{PSS} M. J. Pelsmajer, M. Schaefer,
  D. Stefankovic. Removing even crossings on surfaces. {\em  Preprint} 2009.

\bibitem[Te]{T} G. Tesler, Matchings in Graphs on Non-orientable
  Surfaces. {\em J. Comb. Theory,} Series B {\bf 78}, 198-231 (2000)

\bibitem[Th]{Th} R. Thomas. A survey of Pfaffian orientations of
  graphs. {\em International Congress of Mathematicians,} Vol. III,
  963-984. Eur. Math. Soc., Z{\"u}rich, 2006. 

\bibitem[V]{V} L. G. Valiant. Completeness classes in
  algebra. Conference Record of the 11th Annual ACM Symposium on
  Theory of Computing, 1979, Association for Computing Machinery, New
  York, 1979, pp. 249-261.

\bibitem[vdW]{vdW} B. L. van der Warden. Die lange Reichweite der
regelm{\"a}ssigen Atomanordnung in Mischkristallen. {\em Z. Physik}
118:473 (1941).

\end{thebibliography}
\end{document}